\numberwithin{equation}{section} 
\newcommand{\coloneqq}{:=}
\newcommand{\ie}{i.e.}
\newenvironment{pdeq}{ \left\{ \begin{aligned}}{\end{aligned}\right.}
\newcommand{\np}[1]{(#1)}
\newcommand{\bp}[1]{\big(#1\big)}
\newcommand{\bb}[1]{\big[#1\big]}
\newcommand{\Bp}[1]{\bigg(#1\bigg)}
\newcommand{\cala}{{\mathcal A}}
\newcommand{\calf}{{\mathcal F}}
\newcommand{\caln}{{\mathcal N}}
\newcommand{\calp}{{\mathcal P}}
\newcommand{\cals}{{\mathcal S}}
\newcommand{\calt}{{\mathcal T}}
\newcommand{\R}{\mathbb{R}}
\newcommand{\N}{\mathbb{N}}
\DeclareMathOperator{\e}{e}
\DeclareMathOperator{\Div}{div}
\DeclareMathOperator*{\esssup}{ess\,sup}
\newcommand{\bogopr}{\mathfrak{B}}
\newcommand{\embeds}{\hookrightarrow}
\newcommand{\set}[1]{\ensuremath{\{#1\}}}
\newcommand{\setc}[2]{\ensuremath{\{#1\ \vert\ #2\}}}
\newcommand{\setcl}[2]{\ensuremath{\bigl\{#1\ \big\vert\ #2\bigr\}}}
\newcommand{\ball}{\mathrm{B}}
\renewcommand{\restriction}[2]{#1\big | _{#2}}
\newcommand{\proj}{\calp}
\newcommand{\projcompl}{\calp_\bot}
\newcommand{\Rn}{{\R^n}}
\newcommand{\grad}{\nabla}
\newcommand{\D}{{\mathrm D}}
\newcommand{\dx}{{\mathrm d}x}
\newcommand{\dt}{{\mathrm d}t}
\newcommand{\norm}[1]{\lVert#1\rVert}
\newcommand{\snorm}[1]{{\lvert #1 \rvert}}
\newcommand{\WSR}[2]{\mathrm{W}^{#1,#2}} 
\newcommand{\WSRN}[2]{\mathrm{W}^{#1,#2}_0} 
\newcommand{\DSR}[2]{\mathrm{D}^{#1,#2}} 
\newcommand{\WSRloc}[2]{\mathrm{W}^{#1,#2}_{\mathrm{loc}}} 
\newcommand{\CR}[1]{\mathrm{C}^{#1}}  
\newcommand{\LR}[1]{\mathrm{L}^{#1}}
\newcommand{\LRloc}[1]{\mathrm{L}^{#1}_{\mathrm{loc}}} 
\newcommand{\CRi}{\CR \infty}
\newcommand{\CRci}{\CR \infty_0}
\newcommand{\WSRhom}[2]{\mathrm{D}^{#1,#2}} 
\newcommand{\WSRhomN}[2]{\mathrm{D}^{#1,#2}_0} 
\newcommand{\LRper}[1]{\mathrm{L}^{#1}_{\mathrm{per}}}
\newcommand{\LRpercompl}[1]{\mathrm{L}^{#1}_{\mathrm{per},\bot}}
\newcommand{\WSRper}[2]{\mathrm{W}^{#1,#2}_{\mathrm{per}}} 
\newcommand{\WSRpercompl}[2]{\mathrm{W}^{#1,#2}_{\mathrm{per},\bot}}
\newcommand{\Xoseen}[1]{{\mathrm{X}^{#1}_{\rey}}}
\newcommand{\Zoseen}[1]{{\mathrm{Z}^{#1}_{\rey}}}
\newcommand{\vvel}{v}
\newcommand{\vpres}{p}
\newcommand{\Vvel}{V}
\newcommand{\wvel}{w}
\newcommand{\wpres}{\mathfrak{q}}
\newcommand{\uvel}{u}
\newcommand{\upres}{\mathfrak{p}}
\newcommand{\tin}{\text{in }}
\newcommand{\tif}{\text{if }}
\newcommand{\ton}{\text{on }}
\newcommand{\rey}{\lambda}
\newcommand{\per}{\calt}
\newcommand{\eone}{\e_1}
\newcommand{\cutoff}{\chi}
\theoremstyle{plain}
\newtheorem{thm}{Theorem}[section]
\newtheorem{lem}[thm]{Lemma}
\newtheorem{prop}[thm]{Proposition}
\theoremstyle{remark}
\newtheorem{rem}[thm]{Remark}
\begin{document}
%%%%%%%%%%%%%%%%%%%%%%%%%%%%%%%%%%%%%%%%%%%%%%%%%%%%%%%%%%%%%%
%%          Title, author, date, abstract, etc.             %%
%%%%%%%%%%%%%%%%%%%%%%%%%%%%%%%%%%%%%%%%%%%%%%%%%%%%%%%%%%%%%%
\title{New results for the Oseen problem with applications to the Navier--Stokes equations in exterior domains}

\author{
Thomas Eiter
and
Giovanni P. Galdi
}

\date{}
\maketitle

\begin{abstract}
We prove new existence and uniqueness results in full Sobolev spaces 
for the steady-state Oseen problem in a smooth exterior domain of $\Rn$, $n\ge 2$. These results are then employed, on the one hand, in the study of analogous properties for the corresponding (linear) time-periodic case and, on the other hand and more significantly, to prove analogous properties for their nonlinear counterpart, at least for small data.   
\par 
\end{abstract}

\noindent\textbf{MSC2010:}   35Q30, 35B10, 76D05, 76D07.
\\
\noindent\textbf{Keywords:} Oseen problem, Sobolev spaces, Navier-Stokes, Time-periodic solutions.

%%%%%%%%%%%%%%%%%%%%%%%%%%%%%%%%%%%%%%%%%%%%%%%%%%%%%%%%%%%%%%
%%          Main document                                   %%
%%%%%%%%%%%%%%%%%%%%%%%%%%%%%%%%%%%%%%%%%%%%%%%%%%%%%%%%%%%%%%
\section{Introduction}

As is well known, the steady-state Oseen problem consists in solving the following set of equations
\begin{align}\label{sys:Oseen}
\begin{pdeq}
-\Delta\uvel+\rey\partial_1\uvel+\grad\upres&= f 
& \tin \Omega, \\
\Div\uvel&=0
& \tin \Omega, \\
\uvel&=0
&\ton \partial\Omega, \\
\lim_{\snorm{x}\to\infty} u(x)&=0,
\end{pdeq}
\end{align}
where $\Omega$ is an exterior domain of $\Rn$, $f\colon\Omega\to\Rn$ and $\lambda$ ($>0$)  are  given external force and dimensionless (Reynolds) number,
whereas  $\uvel\colon\Omega\to\Rn$ and $\upres\colon\Omega\to\R$
are unknown velocity  and pressure fields, respectively. 
\par
Problem \eqref{sys:Oseen} has been investigated by a number of authors, beginning with the pioneering work \cite{GaO}; for a rather detailed, yet incomplete, list of contributors and corresponding contributions we refer the reader to  \cite[Chapter VII]{GaldiBookNew}, \cite{amr} and the bibliography there included. The peculiarity of these results  is due to the circumstance that the function space where $u$ belongs is {\em not} a full Sobolev space but, instead, a homogeneous Sobolev space. The latter means that for a given $f$ in the Lebesgue space $\LR{q_3}(\Omega)$ (suitable $q_3\in (1,\infty)$),  the associated velocity-field solution  $u$, its first derivatives and its second derivatives belong, in the order, to {\em different} $\LR{q_i}$-spaces, $i=1,2,3$, with 
\begin{equation}\label{1}
q_1>q_2>q_3.
\end{equation} 
These findings are sharp, in the sense that, under the stated assumptions on $f$, it can be shown by means of counterexamples that the numbers $q_1,q_2,q_3$ must, in general, be different and satisfy \eqref{1}. 
\par
However, particularly motivated by the recent approaches to the study of time-periodic well-posedness \cite{GaldiKyed_TPflowViscLiquidpBody,GaKy} and time-periodic bifurcation \cite{GaBi,GaBi1}, we would like to investigate which {\em further} assumptions  (if any) $f$ must satisfy in order to ensure that $u$ belongs to the full Sobolev space $\WSR{2}{q}(\Omega)$ ($q\equiv q_3$). A positive answer to this question would, for example, allow to frame time-periodic bifurcation in a full Sobolev space and hopefully, analyze the phenomenon of secondary bifurcation in a similar way as that employed for flow in bounded domains \cite{JI}. Notice that the rigorous interpretation of this phenomenon in the case of an exterior domain is, to date, an entirely open problem.     
\par
The main objective of this paper is to show that if, in addition to being in $\LR{q}$, $f$ is in the dual, $\WSRhomN{-1}{r}(\Omega)$, of a suitable homogeneous Sobolev space\footnote{See the next section for the precise definition of $\WSRhomN{-1}{r}(\Omega)$.}, with $r=r(q,n)$, then there exists a unique $(u,\mathfrak p)$ solving \eqref{sys:Oseen} with, in particular,  $u\in \WSR{2}{q}(\Omega),\nabla\mathfrak p\in \LR{q}(\Omega)$. 
Moreover, the solution depends continuously on $f$, uniformly in $\lambda\in (0,\lambda_0)$ for arbitrarily fixed $\lambda_0>0$; see Theorem \ref{thm:OseenLinear}. 
\par
In view of the results established in \cite{GaldiKyed_TPflowViscLiquidpBody}, Theorem \ref{thm:OseenLinear} produces an immediate corollary that ensures that a similar property holds also for time-periodic solutions of period $\calt>0$ to the (linear) Oseen problem (see \eqref{sys:OseenTP}), provided $f$ is periodic of the same period $\calt$; see Theorem  \ref{thm:OseenLinearTPfull}.
\par
Finally, combining Theorem \ref{thm:OseenLinear} and Theorem  \ref{thm:OseenLinearTPfull} with the contraction mapping theorem, we may extend the results proved there to the fully nonlinear Navier--Stokes case (see \eqref{sys:NavierStokesOseen}, \eqref{sys:NavierStokesOseenTP}), on condition that the ``size" of $f$ and $\lambda$ is suitably restricted, and $n\ge 3$. We thus show existence in full Sobolev space for both steady-state (Theorem \ref{thm:OseenNonlinear}) and time-periodic (Theorem \ref{thm:OseenNonlinearTP}) Navier--Stokes problems in exterior domain. 
\par
The plan of the paper is as follows. In Section 2, after recalling some basic notation used in the paper, we state and comment our main results. In the subsequent Section 3, we provide the proof of well-posedness for the  steady-state  (Theorem \ref{thm:OseenLinear}) and time-periodic (Theorem \ref{thm:OseenLinearTPfull}) Oseen problem. Finally, in Section 4 we extend the results of the preceding section to the fully nonlinear case; see Theorem \ref{thm:OseenNonlinear} and Theorem~\ref{thm:OseenNonlinearTP}.   

\section{Statement of the Main Results}
We begin to introduce our principal notation. Unless otherwise stated, by the symbol $\Omega$ we mean a (smooth) exterior domain of $\Rn$, \ie, the complement of a (smooth) compact set $\Omega_0$. With the origin of coordinates in the interior of $\Omega_0$ we put $\ball_R:=\{x\in \Rn:|x|<R\}$,    $\Omega_R\coloneqq\Omega\cap\ball_R$, and $\Omega^R\coloneqq\Omega\setminus\overline{\ball^R}$.

For $(t,x)\in\R\times\Omega$, we set $\partial_t\coloneqq\partial/\partial t$,  $\partial_j\coloneqq\partial/\partial{x_j}$, $j=1,\ldots,n$, and, as customary, for $\alpha\in\N_0^n$ we  set
$\D^\alpha\coloneqq\partial_{1}^{\alpha_1}\cdots\partial_{n}^{\alpha_n}$,
and denote by $\grad^k\uvel$ the collection of all  spatial
derivatives $\D^\alpha\uvel$ of $u$ of order $\snorm{\alpha}=k$.

For $\cala$ an open set of $\R^n$ and $q\in[1,\infty]$, we denote by
$\LR{q}(\cala)$ and
$\WSR{k}{q}(\cala)$  the classical Lebesgue and Sobolev spaces  of order $k\in\N$,
equipped with norms $\norm{\cdot}_{q}=\norm{\cdot}_{q;\cala}$ and $\norm{\cdot}_{k,q}=\norm{\cdot}_{k,q;\cala}$, respectively.
We also consider homogeneous Sobolev spaces:
\[
\WSRhom{k}{q}(\cala) \coloneqq
\setc{
\uvel\in\LRloc{1}(\cala)}{
\grad^k\uvel\in\LR{q}(\cala)
},
\]
with corresponding seminorm
\[
\snorm{\uvel}_{k,q}=
\snorm{\uvel}_{k,q;\cala}\coloneqq
\norm{\grad^k\uvel}_{q;\cala}\coloneqq
\sum_{\snorm{\alpha}=k}\norm{\D^\alpha\uvel}_{q;\cala}\,,
\]
and $\WSRhomN{k}{q}(\cala)$ obtained by (Cantor) completing
$\CRci(\cala)$
in the norm  $\snorm{\,\cdot\,}_{k,q}$.
We indicate the latter's  dual space by $\WSRhomN{-k}{q'}(\cala)$, 
where $q'=q/(q-1)$, $q\in(1,\infty)$,
with norm $\snorm{\,\cdot\,}_{-k,q'}$.

Let $X$ be a seminormed vector space, $\per>0$,  and $q\in[1,\infty]$. 
Then $\LRper{q}(\R;X)$ is the space of all measurable 
 $f\colon\R\to X$ such that $f(t+\per)=f(t)$ for almost all $t\in\R$ and
$\norm{f}_{\LRper{q}(\R;X)}<\infty$, 
where
\[
\norm{f}_{\LRper{q}(\R;X)}
\coloneqq 
\Bp{\frac{1}{\per}\int_0^\per \norm{f(t)}_X^q\,\dt}^\frac{1}{q} \quad \tif q<\infty, 
\qquad
\norm{f}_{\LRper{\infty}(\R;X)}
\coloneqq 
\esssup\limits_{t\in\R} \norm{f(t)}_{X}.
\]
For simplicity, we set
$\LRper{q}(\R\times\cala)\coloneqq\LRper{q}(\R;\LR{q}(\cala))$ and
$\norm{f}_{q}\coloneqq\norm{f}_{\LRper{q}(\R;\LR{q}(\cala))}$
for $f\in\LRper{q}(\R\times\cala)$.
Moreover, we introduce the ``maximal regularity space"
\[
\WSRper{1,2}{q}(\R\times\cala)
\coloneqq
\setcl{\uvel\in\LRper{q}(\R\times\cala)}
{\uvel\in\LRper{q}(\R;\WSR{2}{q}(\cala)),\ 
\partial_t\uvel\in\LRper{q}(\R\times\cala)},
\]
equipped with the norm
\[
\norm{\uvel}_{1,2,q}
\coloneqq\norm{\uvel}_{\WSRper{1,2}{q}(\R\times\cala)}
\coloneqq\norm{\uvel}_{\LRper{q}(\R;\WSR{2}{q}(\cala))}
+\norm{\partial_t\uvel}_{\LR{q}(\R\times\cala)}.
\]

For functions $f\in\LRper{q}(\R;X)$ 
we introduce the projections
\[
\proj f \coloneqq \frac{1}{\per}\int_0^\per f(t)\,\dt,
\qquad
\projcompl f\coloneqq f-\proj f,
\]
and we call $\proj f\in X$ the \emph{steady-state} part
and $\projcompl f \in \LRper{q}(\R;X)$ the \emph{purely oscillatory} part of $f$.
Setting $\LRpercompl{q}(\R;X)\coloneqq\projcompl\LRper{q}(\R;X)$,
we obtain the decomposition
\[
\LRper{q}(\R;X)=X\oplus\LRpercompl{q}(\R;X).
\]
We shall also  use the notation 
\[
\LRpercompl{q}(\R\times\cala)\coloneqq\projcompl\LRper{q}(\R\times\cala),
\qquad
\WSRpercompl{1,2}{q}(\R\times\cala)\coloneqq\projcompl\WSRper{1,2}{q}(\R\times\cala).
\]

Unless otherwise stated, we do not distinguish between the real-valued function space $X$
and its $\Rn$-valued analogue $X^n$, $n\in\N$. 

We use the letter $C$ to denote generic positive constants in our estimates.
The dependence of a constant $C$ on quantities $a,b,\ldots$, 
will be emphasized by  writing $C(a,b,\ldots)$.
\smallskip\par
We are now in a position to state our main findings. We begin with the following theorem that, in fact, represents the key result upon which all the others rely.  

\begin{thm}\label{thm:OseenLinear}
Let $\Omega\subset\Rn$, $n\ge2$, 
and let $q\in(1,\infty)$, $r\in\np{\frac{n+1}{n},n+1}$,  $0<\rey\leq\rey_0$.
Set $s\coloneqq\frac{(n+1)r}{n+1-r}$.
Then, for every $f\in\LR{q}(\Omega)^n\cap\WSRhomN{-1}{r}(\Omega)^n$ 
there exists a solution
\[
\uvel\in\WSRhom{2}{q}(\Omega)^n\cap\WSRhom{1}{r}(\Omega)^n\cap\LR{s}(\Omega)^n, \qquad 
\upres\in\WSRhom{1}{q}(\Omega)
\]
to \eqref{sys:Oseen}.
This solution satisfies 
$\partial_1\uvel\in\LR{q}(\Omega)^n$
and obeys the estimates
\begin{align}
\snorm{\uvel}_{1,r}
+\rey^{\frac{1+\delta}{n+1}}\norm{\uvel}_{\frac{(n+1)r}{n+1-r}}
&\leq 
 C
\rey^{-\frac{M}{n+1}}\snorm{f}_{-1,r},
\label{est:OseenLinearEstWeak}
\\
\snorm{\uvel}_{2,q}
+\rey\norm{\partial_1\uvel}_q
+\norm{\grad\upres}_q
&\leq 
 C
\bp{
\norm{f}_q
+\rey^{-\frac{M}{n+1}}\snorm{f}_{-1,r}
}
\label{est:OseenLinearEstStrong}
\end{align}
for some constant $ C= C\np{n,q,r,\Omega,\rey_0}>0$,
where
\begin{align}\label{eq:DefExponentRHSLinearOseen}
M=
\begin{cases}
2 &\tif \frac{n+1}{n}<r\leq\frac{n}{n-1}, \\
0 &\tif \frac{n}{n-1}<r<n, \\
1 &\tif n\leq r < n+1,
\end{cases}
\qquad
\delta=
\begin{cases}
1 &\tif n=r=2,\\
0 &\text{else}.
\end{cases}
\end{align}
In particular, if $s\leq q$, then $\uvel\in\WSR{2}{q}(\Omega)^n$ and
\begin{align}\label{est:OseenLinearEstFullNorm}
\begin{split}
\rey^{\frac{\np{1+\delta}\theta}{n+1}}\norm{\uvel}_{q}
+\rey^{\frac{\np{1+\delta}\theta}{2(n+1)}}\snorm{\uvel}_{1,q}
+\snorm{\uvel}_{2,q} 
\leq 
 C
\bp{
\norm{f}_q
+\rey^{-\frac{M}{n+1}}\snorm{f}_{-1,r}
}
\end{split}
\end{align}
where
\begin{align}\label{eq:DefThetaFullNormEst}
\theta\coloneqq\frac{qs}{n(q-s)+qs}=\frac{(n+1)qr}{n(n+1)(q-r)+qr}\in[0,1].
\end{align}
Moreover, if $\np{\uvel_1,\upres_1}$ is another solution to \eqref{sys:Oseen}
that belongs to the same function class as $\np{\uvel,\upres}$,
then $\uvel=\uvel_1$ and $\upres=\upres_1+c$ for some constant $c\in\R$. 

Additionally, if $r>\frac{n}{n-1}$, 
we can choose $\upres$ such that $\upres\in\LR{r}(\Omega)$. 
Then $\partial_1\uvel\in\DSR{-1}{r}(\Omega)$ and it holds
\begin{align}\label{est:OseenLinearEstPressure}
\norm{\upres}_r+\rey\snorm{\partial_1\uvel}_{-1,r}
\leq 
 C
\rey^{-\frac{M}{n+1}}\snorm{f}_{-1,r}.
\end{align}
\end{thm}
\begin{rem}\label{rem:range_q_FullEst}
Note that the condition $s\leq q$ is equivalent to $\frac{1}{q}\leq\frac{1}{r}-\frac{1}{n+1}$. 
Therefore, the assumption $r>\frac{n+1}{n}$ in Theorem \ref{thm:OseenLinear} 
implies the necessary condition
$
q>\frac{n+1}{n-1}\,.
$
\end{rem}\par
The first, important consequence of this theorem is presented in the next one concerning the corresponding linear time-periodic problem
\begin{align}\label{sys:OseenTP}
\begin{pdeq}
\partial_t\uvel-\Delta\uvel+\rey\partial_1\uvel+\grad\upres&= f 
& \tin \R\times\Omega, \\
\Div\uvel&=0
& \tin \R\times\Omega, \\
\uvel&=0
&\ton \R\times\partial\Omega, \\
\lim_{\snorm{x}\to\infty} \uvel(t,x)&=0, 
&t\in\R,
\end{pdeq}
\end{align}
with  $f\colon\R\times\Omega\to\Rn$ a  given time-periodic external force. 
Precisely, we will prove the following.

\begin{thm}\label{thm:OseenLinearTPfull}
Let $\Omega\subset\Rn$, $n\geq2$,
and let $q\in(1,\infty)$, $r\in\np{\frac{n+1}{n},n+1}$ and $0<\rey\leq\rey_0$.
Set $s\coloneqq\frac{(n+1)r}{n+1-r}$. Then,
for every $f\in\LR{q}(\R\times\Omega)^n$ with $\proj f\in\WSRhom{-1}{r}(\Omega)^n$
there is a solution $\np{\uvel,\upres}=\np{\vvel+\wvel,\vpres+\wpres}$ to \eqref{sys:OseenTP}
with
\begin{align*}
\vvel&\in\WSRhom{2}{q}(\Omega)^n\cap\WSRhom{1}{r}(\Omega)^n\cap\LR{s}(\Omega)^n, & 
\vpres&\in\WSRhom{1}{q}(\Omega), \\
\wvel&\in\WSRpercompl{1,2}{q}(\R\times\Omega)^n, &
\wpres&\in\LRpercompl{q}(\R;\WSRhom{1}{q}(\Omega)),
\end{align*}
which satisfies
\begin{align}\label{est:OseenLinearTPFull}
\begin{split}
\snorm{\vvel}_{1,r}
+\rey^{\frac{1+\delta}{n+1}}\norm{\vvel}_{s}
&\leq 
C
\rey^{-\frac{M}{n+1}}\snorm{\proj f}_{-1,r},
\\
\snorm{\vvel}_{2,q}
+\rey\norm{\partial_1\vvel}_q
+\norm{\grad\vpres}_q
&\leq 
 C
\bp{
\norm{\proj f}_q
+\rey^{-\frac{M}{n+1}}\snorm{\proj f}_{-1,r}
},
\\
\norm{\wvel}_{1,2,q}
+\norm{\grad\wpres}_q
&\leq C\norm{\projcompl f}_q
\end{split}
\end{align}
for a constant 
$ C= C\np{q,r,\Omega,\rey_0}>0$
and $M$ and $\delta$ as in \eqref{eq:DefExponentRHSLinearOseen}.
Moreover, if
$\np{\uvel_1,\upres_1}$
is another solution to \eqref{sys:OseenTP} 
that belongs to the same function class as $\np{\uvel,\upres}$,
then $\uvel=\uvel_1$ and $\upres=\upres_1+\upres_0$ for 
some $\per$-periodic function $\upres_0\colon\R\to\R$.

In particular, if $s\leq q$, 
then $\uvel\in\WSRper{1,2}{q}(\R\times\Omega)^n$ 
and 
\begin{align}\label{est:OseenLinearEstTPFullNormSteadyState}
\rey^{\frac{\np{1+\delta}\theta}{n+1}}\norm{\vvel}_{q}
+\rey^{\frac{\np{1+\delta}\theta}{2(n+1)}}\snorm{\vvel}_{1,q}
+\snorm{\vvel}_{2,q} 
\leq 
 C
\bp{
\norm{\proj f}_q
+\rey^{-\frac{M}{n+1}}\snorm{\proj f}_{-1,r}
}
\end{align}
where $\theta\in[0,1]$ is given in \eqref{eq:DefThetaFullNormEst}.
\end{thm}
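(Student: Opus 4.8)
\emph{Proof plan.} The natural strategy is to decompose \eqref{sys:OseenTP} into its steady-state and purely oscillatory components via the projections $\proj$ and $\projcompl$, treat each piece with the appropriate linear theory, and recombine. First I would apply $\proj$ to \eqref{sys:OseenTP}. Since any $\per$-periodic $\uvel$ satisfies $\proj\partial_t\uvel=\tfrac1\per\int_0^\per\partial_t\uvel\,\dt=0$, the pair $(\vvel,\vpres):=(\proj\uvel,\proj\upres)$ should solve the steady-state Oseen system \eqref{sys:Oseen} with right-hand side $\proj f$. Because $f\in\LR q(\R\times\Omega)$ and $\proj$ is an average in $t$, Minkowski's integral inequality gives $\proj f\in\LR q(\Omega)$, while $\proj f\in\WSRhomN{-1}{r}(\Omega)$ by hypothesis. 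Hence Theorem \ref{thm:OseenLinear} applies and produces a unique $(\vvel,\vpres)$ in the asserted classes $\WSRhom 2q(\Omega)\cap\WSRhom 1r(\Omega)\cap\LR s(\Omega)$ and $\WSRhom 1q(\Omega)\cap\LR r(\Omega)$, satisfying precisely the first two lines of \eqref{est:OseenLinearTPFull}.

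For the purely oscillatory part $(\wvel,\wpres):=(\projcompl\uvel,\projcompl\upres)$, I would invoke the time-periodic maximal-regularity theory for the Oseen problem in exterior domains from \cite{GaldiKyed_TPflowViscLiquidpBody}: for any $g\in\LRpercompl q(\R\times\Omega)$ there is a unique solution $(\wvel,\wpres)$ of \eqref{sys:OseenTP} with data $g$ satisfying $\wvel\in\WSRpercompl{1,2}{q}(\R\times\Omega)$, $\wpres\in\LRpercompl q(\R;\WSRhom 1q(\Omega))$, together with the bound $\norm{\wvel}_{1,2,q}+\norm{\grad\wpres}_q\le C\norm{g}_q$; taking $g=\projcompl f$ yields exactly the third line of \eqref{est:OseenLinearTPFull}. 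The structural point worth stressing is that \emph{no} auxiliary $\WSRhomN{-1}{r}$-assumption on $\projcompl f$ is needed: the loss of summability at spatial infinity that forces the homogeneous-space framework in Theorem \ref{thm:OseenLinear} is a purely steady-state phenomenon, whereas the $\partial_t$-term already produces full maximal $\LR q$-regularity in the oscillatory sector.

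Setting $\uvel:=\vvel+\wvel$, $\upres:=\vpres+\wpres$ then gives a solution of \eqref{sys:OseenTP} in the claimed class, and adding the two sets of bounds yields \eqref{est:OseenLinearTPFull}. For uniqueness, let $(\uvel_1,\upres_1)$ be another solution in the same function class; applying $\proj$ and $\projcompl$ (which leave the respective classes invariant and commute with the differential operators) shows that $\proj\uvel_1$ solves the steady Oseen system with data $\proj f$ and $\projcompl\uvel_1$ the oscillatory one with data $\projcompl f$, so the uniqueness assertions of Theorem \ref{thm:OseenLinear} and of \cite{GaldiKyed_TPflowViscLiquidpBody} force $\proj\uvel_1=\vvel$ and $\projcompl\uvel_1=\wvel$, hence $\uvel_1=\uvel$. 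For the pressure, $\proj\upres_1$ has gradient $\grad\vpres$ and lies in $\LR r(\Omega)$, so $\proj\upres_1=\vpres$ (additive constants are excluded by $\LR r$-integrability on an exterior domain), whereas the oscillatory pressure is determined only up to a spatially constant, time-dependent function of zero time-average; collecting these, $\upres_1=\upres+\upres_0$ with $\upres_0=\upres_0(t)$ $\per$-periodic and $\proj\upres_0=0$. Finally, when $s\le q$, Theorem \ref{thm:OseenLinear} additionally gives $\vvel\in\WSR 2q(\Omega)$ and \eqref{est:OseenLinearEstFullNorm} for $(\vvel,\vpres)$; since $\wvel\in\WSRpercompl{1,2}{q}(\R\times\Omega)\subset\WSRper{1,2}{q}(\R\times\Omega)$ already, one gets $\uvel\in\WSRper{1,2}{q}(\R\times\Omega)$, and \eqref{est:OseenLinearEstTPFullNormSteadyState} is just \eqref{est:OseenLinearEstFullNorm} read off for $\vvel$.

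This is essentially a corollary-type argument, so the only genuine work lies in making the formal application of $\proj$ and $\projcompl$ rigorous — that is, verifying that for an arbitrary solution in the stated class the averaged part $\proj\uvel$ (respectively the oscillatory part $\projcompl\uvel$) really belongs to the domain of the steady (respectively oscillatory) Oseen operator and satisfies the averaged (respectively oscillatory) system in the appropriate weak sense, which requires boundedness of the projections on the relevant function spaces and their commutation with $\partial_t$, $\Delta$, $\partial_1$, $\grad$ and $\Div$ — and in carefully tracking the residual non-uniqueness of the pressure. I expect step involving the a priori regularity available for an \emph{arbitrary} solution (needed to run the uniqueness argument) to be the most delicate point, though not a deep one.
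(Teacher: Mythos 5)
Your proposal is correct and follows essentially the same route as the paper: the authors likewise set $f_1=\proj f$, $f_2=\projcompl f$, solve the steady-state problem for $f_1$ via Theorem \ref{thm:OseenLinear}, solve the purely oscillatory problem for $f_2$ via the maximal-regularity result of Galdi--Kyed (recalled as Theorem \ref{thm:OseenLinearTPPurelyOsc}), and add the two solutions, with uniqueness deduced by projecting an arbitrary solution onto its steady and oscillatory parts. Your additional remarks on Minkowski's inequality, the commutation of $\proj$ with the differential operators, and the residual pressure indeterminacy only flesh out details the paper leaves implicit.
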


\begin{rem}\label{rem:range_q_FullEst_1}
The observation made in Remark \ref{rem:range_q_FullEst} applies to Theorem \ref{thm:OseenLinearTPfull} as well.
\end{rem}

Next, combining the above theorems with the contraction mapping theorem, we are able to extend analogous results to the nonlinear case, under the assumption of ``small" data.\par More specifically, let us begin to consider the steady-state problem
\begin{align}\label{sys:NavierStokesOseen}
\begin{pdeq}
-\Delta\vvel+\rey\partial_1\vvel+\grad\vpres+\vvel\cdot\grad\vvel&= f 
& \tin \Omega, \\
\Div\vvel&=0
& \tin \Omega, \\
\vvel&=-\rey\eone
&\ton \partial\Omega, \\
\lim_{\snorm{x}\to\infty} \vvel(x)&=0.
\end{pdeq}
\end{align}
We shall prove the following result.
\begin{thm}\label{thm:OseenNonlinear}
Let $\Omega\subset\R^n$, $n\geq 3$, and let $q,r\in(1,\infty)$ with
\begin{align}\label{cond:range_qr_nonlinear}
q&\geq\frac{n}{3},
&
\frac{1}{3q}+\frac{1}{n+1}&\leq\frac{1}{r},
&
\frac{2}{q}-\frac{4}{n}&\leq\frac{1}{r},
&
\frac{2}{n+1}&\leq\frac{1}{r}
<\begin{cases}
\frac{n-1}{n} &\tif n=3,4, \\
\frac{n}{n+1} &\tif n\geq5.
\end{cases}
\end{align}
Then there is $\rey_0>0$ such that 
for all $0<\rey\leq\rey_0$ 
we may find $\varepsilon>0$ such that
for all $f\in\LR{q}(\Omega)\cap \WSRhomN{-1}{r}(\Omega)$ 
satisfying
$\norm{f}_q+\snorm{f}_{-1,r}\leq \varepsilon$
there exists a pair $(\vvel,\vpres)$ with
\[
\vvel\in\WSRhom{2}{q}(\Omega)\cap\WSRhom{1}{r}(\Omega)\cap\LR{\frac{(n+1)r}{n+1-r}}(\Omega),
\quad
\partial_1\vvel\in\LR{q}(\Omega),
\quad
\vpres\in\WSRhom{1}{q}(\Omega)
\]
satisfying \eqref{sys:NavierStokesOseen}.
In particular, if $s\leq q$, then $\vvel\in\WSR{2}{q}(\Omega)^n$.
\end{thm}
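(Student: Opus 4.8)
\emph{Sketch of the intended proof.} The strategy is to recast \eqref{sys:NavierStokesOseen} as a perturbed Oseen problem with \emph{homogeneous} boundary data and to solve it by a contraction argument built on Theorem~\ref{thm:OseenLinear}. First I would remove the boundary datum: fix once and for all a smooth, solenoidal, compactly supported field $V_0$ on $\Omega$ with $V_0=-\eone$ in a neighbourhood of $\partial\Omega$, put $V\coloneqq\rey V_0$, and seek $\vvel=V+\uvel$. Using $\Div\uvel=0$, the pair $\vvel=V+\uvel$ solves \eqref{sys:NavierStokesOseen} if and only if $(\uvel,\vpres)$ solves the Oseen system \eqref{sys:Oseen} with right-hand side $F_0+N(\uvel)$, zero boundary value, and decay at infinity, where
\[
F_0\coloneqq f+\Delta V-\rey\partial_1 V-V\cdot\grad V,\qquad
N(\uvel)\coloneqq-V\cdot\grad\uvel-\uvel\cdot\grad V-\uvel\cdot\grad\uvel .
\]
Since $V_0$ is smooth with compact support, $F_0\in\LR{q}(\Omega)\cap\WSRhomN{-1}{r}(\Omega)$ with $\norm{F_0}_q+\snorm{F_0}_{-1,r}\le\norm{f}_q+\snorm{f}_{-1,r}+C\rey$ (the quadratic term contributing only $C\rey^2$), and the pressure of \eqref{sys:NavierStokesOseen} is precisely the one produced for $\uvel$, as $V$ carries none.

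To set up the fixed point, let $\cals$ be the linear solution operator $g\mapsto\uvel$ furnished by Theorem~\ref{thm:OseenLinear}, and let $X$ be the Banach space of velocity fields normed by the left-hand side of \eqref{est:OseenLinearEst},
\[
\norm{\uvel}_X\coloneqq\snorm{\uvel}_{2,q}+\rey\norm{\partial_1\uvel}_q+\snorm{\uvel}_{1,r}+\rey^{\frac{1+\delta}{n+1}}\norm{\uvel}_s+\rey\snorm{\partial_1\uvel}_{-1,r},
\]
so that $\norm{\cals g}_X\le C\norm{g}_{*}$ with $\norm{g}_{*}\coloneqq\norm{g}_q+\rey^{-M/(n+1)}\snorm{g}_{-1,r}$. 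A solution of the reduced problem is then a fixed point of $\Phi\colon\uvel\mapsto\cals F_0+\cals N(\uvel)$ in a ball $B_\rho$ of small radius $\rho$ in $X$, the pressure and all its bounds being carried along by Theorem~\ref{thm:OseenLinear}.

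The crucial and hardest step is the bilinear estimate: for $\uvel,\wvel\in X$ one has $\uvel\cdot\grad\wvel\in\LR{q}(\Omega)\cap\WSRhomN{-1}{r}(\Omega)$ with $\norm{\uvel\cdot\grad\wvel}_{*}\le C\rey^{-\mu}\norm{\uvel}_X\norm{\wvel}_X$ for a suitable $\mu\ge0$. For the $\LR{q}$-part I would use Hölder's inequality together with the Sobolev embeddings of the homogeneous spaces $\WSRhom{2}{q}(\Omega)$ and $\WSRhom{1}{r}(\Omega)$ and the built-in integrability $\uvel\in\LR{s}(\Omega)$; the conditions $q\ge\frac n3$ and $\frac1{3q}+\frac1{n+1}\le\frac1r$ are exactly what make an admissible Hölder pairing available. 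For the $\WSRhomN{-1}{r}$-part I would write $\uvel\cdot\grad\wvel=\Div(\uvel\otimes\wvel)$ (using $\Div\uvel=0$), so that $\snorm{\uvel\cdot\grad\wvel}_{-1,r}\le\norm{\uvel\otimes\wvel}_r\le\norm{\uvel}_{2r}\norm{\wvel}_{2r}$; here the conditions $\frac2{n+1}\le\frac1r$ (equivalently $s\le2r$) and $\frac2q-\frac4n\le\frac1r$ (equivalently $2r\le nq/(n-2q)$) guarantee $\uvel\in\LR{2r}(\Omega)$ with $\norm{\uvel}_{2r}\le C\rey^{-\beta}\norm{\uvel}_X$, obtained by interpolating $\uvel\in\LR{s}(\Omega)$ against $\WSRhom{2}{q}(\Omega)\embeds\LR{nq/(n-2q)}(\Omega)$, while the upper bound imposed on $\frac1r$ keeps $r$ in the admissible interval $(\frac{n+1}{n},n+1)$ of Theorem~\ref{thm:OseenLinear} with all embedding constants finite. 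Tracking the powers of $\rey$ arising from the weight $\rey^{(1+\delta)/(n+1)}$ on $\norm{\uvel}_s$ and from the factor $\rey^{-M/(n+1)}$ built into $\norm{g}_{*}$, the resulting $\mu$ must satisfy $\mu<1-\frac{M}{n+1}$; checking this inequality across the whole parameter set \eqref{cond:range_qr_nonlinear} is where the restriction $n\ge3$ and the sharper lower bound on $r$ for $n=3,4$ (which forces $M=0$ there) are used. I expect this exponent bookkeeping to be the main technical obstacle.

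Granting the bilinear estimate, since $V=\rey V_0$ one gets $\norm{N(\uvel)}_{*}\le C\rey^{\gamma}\norm{\uvel}_X+C\rey^{-\mu}\norm{\uvel}_X^2$ and a matching Lipschitz bound for $N(\uvel)-N(\wvel)$ on $B_\rho$, with $\gamma\coloneqq1-\frac{M}{n+1}>0$ (note $M\le2<n+1$ and $\delta=0$ for $n\ge3$), while $\norm{F_0}_{*}\le(1+\rey^{-M/(n+1)})(\norm{f}_q+\snorm{f}_{-1,r})+C\rey^{\gamma}$. I would then first choose $\rey_0>0$ so small that $C\rey_0^{\gamma}$ lies below the threshold making $\Phi$ a $\frac12$-contraction of a ball $B_\rho$ with $\rho$ of order $\rey^{\gamma}$ --- this is the step that uses $\gamma>\mu$ --- and afterwards, for each $\rey\le\rey_0$, choose $\varepsilon=\varepsilon(\rey)>0$ so small that $C(1+\rey^{-M/(n+1)})\varepsilon\le\frac14\rho$. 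For every $f$ with $\norm{f}_q+\snorm{f}_{-1,r}\le\varepsilon$ the map $\Phi$ then sends $B_\rho$ into itself and is a contraction, so Banach's fixed-point theorem yields a unique $\uvel\in B_\rho$; unwinding the reduction, $\vvel=\rey V_0+\uvel$ together with its associated pressure solves \eqref{sys:NavierStokesOseen} with the asserted regularity. Finally, if $s\le q$ then Theorem~\ref{thm:OseenLinear} gives $\uvel\in\WSR{2}{q}(\Omega)^n$, and since $V_0\in\WSR{2}{q}(\Omega)^n$ we conclude $\vvel\in\WSR{2}{q}(\Omega)^n$.
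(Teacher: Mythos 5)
Your proposal follows essentially the same route as the paper: a solenoidal, compactly supported lifting of the boundary datum scaling linearly in $\rey$, reformulation as a fixed point of $\uvel\mapsto\cals_\rey\np{f+\caln(\uvel)}$ in a $\rey$-weighted space built on Theorem~\ref{thm:OseenLinear}, with the bilinear estimates obtained exactly as in Lemma~\ref{lem:NonlinearEstimatesTP} (Gagliardo--Nirenberg/H\"older for the $\LR{q}$ bound, and $\Div(\uvel\otimes\wvel)$ with $\LR{2r}$ bounds for the $\WSRhomN{-1}{r}$ bound), followed by a contraction on a ball whose radius is a suitable power of $\rey$. Your identification of which conditions in \eqref{cond:range_qr_nonlinear} serve which estimate, and the requirement $\mu<1-\frac{M}{n+1}$, correspond precisely to the paper's condition $\max\set{\theta,M+\eta}<n+1-M$.
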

\begin{rem}
As in Remark \ref{rem:range_q_FullEst}, 
the additional assumption $s\leq q$ is equivalent to $\frac{1}{q}\leq\frac{1}{r}-\frac{1}{n+1}$.
Therefore, the upper bound in \eqref{cond:range_qr_nonlinear}
leads to the necessary conditions
\[
q>\frac{n(n+1)}{n^2-n-1}
\quad \tif n=3,4, \qquad
q>\frac{n+1}{n-1}
\quad \tif n\geq 5.
\]
\end{rem}

Likewise, consider the  problem
\begin{align}\label{sys:NavierStokesOseenTP}
\begin{pdeq}
\partial_t\vvel-\Delta\vvel+\rey\partial_1\vvel+\grad\vpres+\vvel\cdot\grad\vvel&= f 
& \tin \R\times\Omega, \\
\Div\vvel&=0
& \tin \R\times\Omega, \\
\vvel&=-\rey\eone
&\ton \R\times\partial\Omega, \\
\lim_{\snorm{x}\to\infty} \vvel(t,x)&=0,
&t\in\R.
\end{pdeq}
\end{align}
where $f$ is a suitably prescribed time-periodic function.
We shall prove the following.
\begin{thm}\label{thm:OseenNonlinearTP}
Let $\Omega\subset\R^n$, $n\geq 3$, and let 
$q,r\in(1,\infty)$ with
\begin{align}
\frac{n+2}{3}&\leq q\leq n+1, &
\frac{n(n+1)}{n^2-n-1}&< q,
\label{cond:range_q_nonlinearTP}
\\
\frac{2}{q}-\frac{4}{n}&\leq\frac{1}{r}\leq\frac{2}{q},
&
\frac{1}{q}+\frac{1}{n+1}&\leq\frac{1}{r}<
\begin{cases}
\frac{n-1}{n} &\tif n=3,4, \\
\frac{n}{n+1} &\tif n\geq5.
\end{cases}
\label{cond:range_r_nonlinearTP}
\end{align}
Then there is $\rey_0>0$ such that 
for all $0<\rey\leq\rey_0$ we can find $\varepsilon>0$ such that
for all $f\in\LRper{q}(\R\times\Omega)^n$ with $\proj f\in\WSRhomN{-1}{r}(\Omega)^n$ 
satisfying
$\norm{f}_q+\snorm{\proj f}_{-1,r}\leq \varepsilon$
there exists a unique solution 
\[
(\vvel,\vpres)\in\WSRper{1,2}{q}(\R\times\Omega)^n\times\LRper{q}(\R;\WSRhom{1}{q}(\Omega)), \quad 
\proj\vpres\in\LR{r}(\Omega)
\]
to \eqref{sys:NavierStokesOseenTP}.
\end{thm}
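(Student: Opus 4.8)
The plan is to solve \eqref{sys:NavierStokesOseenTP} by a contraction argument based on the linear theory of Theorem~\ref{thm:OseenLinearTPfull}. First I would reduce to a homogeneous boundary condition: since the flux of $-\eone$ across $\partial\Omega$ vanishes, there is a fixed, time-independent, solenoidal field $b\in\CRci(\Rn)^n$, supported in a bounded neighbourhood of $\Omega_0$ and equal to $-\eone$ on $\partial\Omega$ (a standard construction). Put $\Vvel\coloneqq\rey\,b$, which is of size $O(\rey)$ in every Sobolev norm, and write $\vvel=\Vvel+\uvel$; then $(\uvel,\vpres)$ has to solve the time-periodic Oseen problem with homogeneous boundary datum, vanishing limit at infinity, and right-hand side
\[
g(\uvel)\coloneqq f+\Delta\Vvel-\rey\partial_1\Vvel-\Vvel\cdot\grad\Vvel-\Vvel\cdot\grad\uvel-\uvel\cdot\grad\Vvel-\uvel\cdot\grad\uvel .
\]
The terms of $g$ that do not involve $\uvel$ are time-independent and compactly supported, hence --- using that a compactly supported $\LR r(\Omega)$-field lies in $\WSRhomN{-1}{r}(\Omega)$ for $r$ in the admissible range --- they belong to $\LR q(\Omega)^n\cap\WSRhomN{-1}{r}(\Omega)^n$ with norm $\leq C\rey$.

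Next I would set up the fixed point in $\calx\coloneqq\WSRper{1,2}{q}(\R\times\Omega)^n$. By Remark~\ref{rem:range_q_FullEst_1} the hypothesis $\frac1q+\frac1{n+1}\leq\frac1r$ is equivalent to $s\leq q$, so Theorem~\ref{thm:OseenLinearTPfull} applied to an admissible right-hand side yields a solution with velocity in $\calx$ and pressure in $\LRper q(\R;\WSRhom1q(\Omega))$ whose steady part lies in $\LR r(\Omega)$; denote by $\calm(\uvel)$ this solution for the data $g(\uvel)$. The crux is to prove that, for $\uvel\in\calx$, one has $g(\uvel)\in\LR q(\R\times\Omega)^n$, $\proj g(\uvel)\in\WSRhomN{-1}{r}(\Omega)^n$, and
\[
\norm{g(\uvel)}_q+\snorm{\proj g(\uvel)}_{-1,r}\leq C\bp{\norm{f}_q+\snorm{\proj f}_{-1,r}+\rey+\np{\rey+\norm{\uvel}_{1,2,q}}\norm{\uvel}_{1,2,q}},
\]
together with the corresponding Lipschitz bound for $g(\uvel_1)-g(\uvel_2)$. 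For the genuinely quadratic term one uses $\proj(\uvel\cdot\grad\uvel)=\Div\proj(\uvel\otimes\uvel)=\Div\bp{\proj\uvel\otimes\proj\uvel+\proj(\projcompl\uvel\otimes\projcompl\uvel)}$: the steady--steady part is estimated in $\LR r(\Omega)$ by H\"older and the Sobolev embedding of $\WSR2q(\Omega)$ (which requires $\frac2q-\frac4n\leq\frac1r\leq\frac2q$, exactly as for the steady problem of Theorem~\ref{thm:OseenNonlinear}), and the oscillatory--oscillatory part $\proj(\projcompl\uvel\otimes\projcompl\uvel)$ is estimated in $\LR r(\Omega)$ through mixed-norm embeddings of $\WSRpercompl{1,2}{q}(\R\times\Omega)$; meanwhile $\norm{\uvel\cdot\grad\uvel}_{\LR q(\R\times\Omega)}$ is controlled by a H\"older splitting together with the parabolic embedding $\WSRper{1,2}{q}(\R\times\Omega)\embeds\LR p(\R\times\Omega)$, in which $\partial_t$ counts as two spatial derivatives so that $\R\times\Omega$ plays the role of a domain of dimension $n+2$ --- this is where $q>\frac{n+2}{3}$ enters. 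The terms $\Vvel\cdot\grad\uvel$ and $\uvel\cdot\grad\Vvel$ are linear in $\uvel$ with $\rey$-small, compactly supported coefficients, hence contribute at most $C\rey\norm{\uvel}_{1,2,q}$ to both norms. Making every one of these H\"older and embedding inequalities available is precisely what the restrictions \eqref{cond:range_q_nonlinearTP}--\eqref{cond:range_r_nonlinearTP} encode.

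Combining these product estimates with the linear bounds \eqref{est:OseenLinearTPFull} and \eqref{est:OseenLinearEstTPFullNormSteadyState}, one gets, on the closed ball $\calb_\rho\coloneqq\setc{\uvel\in\calx}{\norm{\uvel}_{1,2,q}\leq\rho}$,
\[
\norm{\calm(\uvel)}_{1,2,q}\leq C_0\bp{\rey^{-\kappa}\np{\varepsilon+\rey}+\np{\rey+\rho}\rho},\qquad
\norm{\calm(\uvel_1)-\calm(\uvel_2)}_{1,2,q}\leq C_0\np{\rey+\rho}\norm{\uvel_1-\uvel_2}_{1,2,q},
\]
where $\kappa<1$ (for $n\geq3$) collects the finitely many negative powers of $\rey$ occurring in \eqref{est:OseenLinearTPFull}--\eqref{est:OseenLinearEstTPFullNormSteadyState}. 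Choosing first $\rey_0$ so small that $C_0\rey_0^{1-\kappa}\leq\frac14$ and $C_0\rey_0\leq\frac14$, then $\rho$ with $C_0\rho\leq\frac14$, and finally $\varepsilon=\varepsilon(\rey,\rho)$ with $C_0\rey^{-\kappa}\varepsilon\leq\frac\rho4$, makes $\calm$ a contraction of $\calb_\rho$ into itself; its fixed point $\uvel$ produces $\vvel=\Vvel+\uvel\in\WSRper{1,2}{q}(\R\times\Omega)^n$, together with the pressure furnished by Theorem~\ref{thm:OseenLinearTPfull}, solving \eqref{sys:NavierStokesOseenTP}. Uniqueness in the stated class follows by the same mechanism: the difference of two solutions satisfies a linear time-periodic Oseen problem whose right-hand side is, by the product estimates, bounded by $C\np{\rey+\rho}$ times the $\WSRper{1,2}{q}$-norm of the difference, so \eqref{est:OseenLinearTPFull} and the smallness of $\rey,\rho$ force the difference to vanish, the pressure being then determined up to an additive function of $t$ that is fixed by $\proj\vpres\in\LR r(\Omega)$.

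The main obstacle is the second step. Estimating $\vvel\cdot\grad\vvel$ in $\LR q(\R\times\Omega)$ and, above all, $\proj(\vvel\cdot\grad\vvel)$ in the negative homogeneous norm $\snorm{\,\cdot\,}_{-1,r}$ using only the mixed space--time integrability encoded in the maximal-regularity space is delicate because --- unlike the steady part --- the purely oscillatory component carries no improved spatial decay, so its contribution to the negative-order norm must be extracted from parabolic smoothing alone; balancing this against the spatial integrability available in an exterior domain, and against the $\rey$-losses in the linear estimate (absorbed only by keeping $\rey_0$ small), is exactly what pins down the admissible ranges of $q$ and $r$.
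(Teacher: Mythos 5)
There is a genuine gap in the bookkeeping of the negative powers of $\rey$, and it sits exactly where you yourself flag the ``main obstacle''. Your two displayed fixed-point estimates attach the factor $\rey^{-\kappa}$ only to the data term $\varepsilon+\rey$ and leave the quadratic term $(\rey+\rho)\rho$ free of it. But the quadratic part of $g(\uvel)$ is itself data for the linear solution operator, and passing from the linear estimates \eqref{est:OseenLinearTPFull}--\eqref{est:OseenLinearEstTPFullNormSteadyState} back to the unweighted norm of $\calx=\WSRper{1,2}{q}(\R\times\Omega)^n$ costs $\rey^{-M/(n+1)}$ on $\snorm{\proj g(\uvel)}_{-1,r}$ and a further $\rey^{-(1+\delta)\theta/(n+1)}$ on the lower-order part of the steady velocity. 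The correct bounds are therefore $\norm{\calm(\uvel)}_{1,2,q}\leq C_0\rey^{-\kappa}\bp{\varepsilon+\rey+(\rey+\rho)\rho}$ and, for the Lipschitz constant, $C_0\rey^{-\kappa}(\rey+\rho)$. With that correction your parameter selection collapses: you cannot first fix $\rey_0$ and then choose $\rho$ with $C_0\rho\leq\frac14$ independently of $\rey$, because the contraction condition now forces $\rho\lesssim\rey^{\kappa}$, while absorbing the $O(\rey)$ forcing created by the boundary lifting forces $\rho\gtrsim\rey^{1-\kappa}$; whether this window is nonempty is precisely the quantitative question your argument leaves open, and with a single lumped exponent $\kappa$ it need not be.

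The paper resolves this by not working in the plain maximal-regularity norm. The steady part is measured in the $\rey$-weighted norm $\norm{\vvel}_{\rey}=\snorm{\vvel}_{2,q}+\snorm{\vvel}_{1,r}+\rey^{1/(n+1)}\norm{\vvel}_s$ (the space $\Xoseen{q,r}$), so that the linear operator loses only $\rey^{-M/(n+1)}$ (estimate \eqref{est:LinearOperatorTP}), while the bilinear terms are estimated in Lemma \ref{lem:NonlinearEstimatesTP} with individually tracked losses $\rey^{-\theta/(n+1)}$, $\rey^{-\eta/(n+1)}$, $\rey^{-\zeta/(n+1)}$ arising from the weighted $\LR{s}$ component. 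The scheme then closes only after verifying $\max\set{\theta,\zeta,M+\eta}<n+1-M$ and coupling the parameters as $\rey=\varepsilon=\rho^{\gamma}$ for a suitable $\gamma$; this verification, rather than the product and embedding inequalities you list (which are essentially the right ones, and your lifting of the boundary datum is equivalent to the paper's), is where the ranges \eqref{cond:range_q_nonlinearTP}--\eqref{cond:range_r_nonlinearTP} are actually consumed. The missing ideas are thus the weighted norm on the steady part and the $\rho$--$\rey$ coupling that makes $\calf$ a contraction.
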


\begin{rem}\label{rem:range_q_nonlinearTP}
For $n=3$, condition \eqref{cond:range_q_nonlinearTP} yields $q\in(\frac{12}{5},4]$.
For $n\geq 4$, the second restriction in \eqref{cond:range_q_nonlinearTP} is redundant 
and it simplifies to $q\in(\frac{n+2}{3},n+1]$.
\end{rem}

\section{Proofs of Theorem \ref{thm:OseenLinear} and Theorem \ref{thm:OseenLinearTPfull}}
In order to prove Theorem \ref{thm:OseenLinear},
we first establish the following density result, 
which enables us 
to consider problem \eqref{sys:Oseen} only for  right-hand sides $f\in\CRci(\Omega)^n$.
\begin{prop}\label{prop:DensityInLqD-1r}
Let $\Omega\subset\Rn$ be an arbitrary domain and let $q,r\in(1,\infty)$.
Then $\CRci(\Omega)$ is a dense subset of $\LR{q}(\Omega)\cap\WSRhomN{-1}{r}(\Omega)$.
\end{prop}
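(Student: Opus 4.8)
The goal is to show that $\CRci(\Omega)$ is dense in the intersection space $\LR{q}(\Omega)\cap\WSRhomN{-1}{r}(\Omega)$, equipped with the natural norm $\norm{\cdot}_q+\snorm{\cdot}_{-1,r}$. The plan is to argue in two stages, first approximating a general element of the intersection by compactly supported functions (in both norms simultaneously), and then mollifying.

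\textbf{Step 1: reduction to compact support.} Let $f\in\LR{q}(\Omega)\cap\WSRhomN{-1}{r}(\Omega)$. Fix a cutoff family $\cutoff_R\in\CRci(\Rn)$ with $\cutoff_R=1$ on $\ball_R$, $\supp\cutoff_R\subset\ball_{2R}$, and $\snorm{\grad\cutoff_R}\leq C/R$. I would show $\cutoff_R f\to f$ in $\LR{q}$ (trivial, by dominated convergence) and, more importantly, in the dual norm $\snorm{\cdot}_{-1,r}$. The key point is to estimate $\snorm{(1-\cutoff_R)f}_{-1,r}=\sup\setc{\langle (1-\cutoff_R)f,\varphi\rangle}{\varphi\in\CRci(\Omega),\ \snorm{\varphi}_{1,r'}\leq1}$. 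Writing the pairing as $\langle f,(1-\cutoff_R)\varphi\rangle$, one needs $\snorm{(1-\cutoff_R)\varphi}_{1,r'}$ to be controlled; but $\grad\bp{(1-\cutoff_R)\varphi}=(1-\cutoff_R)\grad\varphi-\varphi\grad\cutoff_R$, and the troublesome term $\varphi\grad\cutoff_R$ has $\LR{r'}$-norm bounded by $(C/R)\norm{\varphi}_{\LR{r'}(\Omega^R\setminus\Omega^{2R})}$. Here one uses a scaled Sobolev/Poincaré-type inequality on the annulus: since $r'\in(1,\infty)$ and, by the dual exponent relation $s=\frac{(n+1)r}{n+1-r}$ has no role here, one has $\norm{\varphi}_{\LR{r^*}(\Rn)}\leq C\snorm{\varphi}_{1,r}$ only when $r<n$; to avoid case distinctions it is cleaner to note that since $\WSRhomN{1}{r'}(\Omega)$ is the $\CRci$-completion, for $\varphi\in\CRci$ one can bound $\norm{\varphi}_{\LR{r'}(\text{annulus})}$ via Sobolev embedding on the fixed-geometry annulus $\ball_2\setminus\ball_1$ rescaled. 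The cleanest route: exploit that $f$ itself, being in $\WSRhomN{-1}{r}$, can first be approximated in that norm by elements supported in annuli, using that $\CRci$ is dense in $\WSRhomN{1}{r'}$ and a duality/biduality argument. Thus I expect \textbf{this step to be the main obstacle}, precisely because the behavior at infinity must be killed in the negative-order norm, and the Sobolev exponent $r$ can be larger or smaller than $n$, so the embedding of $\WSRhomN{1}{r'}$ into a Lebesgue space behaves differently in the three cases. A uniform treatment requires the Sobolev inequality $\norm{\varphi}_{\frac{nr'}{n-r'}}\leq C\snorm{\varphi}_{1,r'}$ for $r'<n$ and a local argument otherwise, or, better, a single ``annular'' estimate $\norm{\varphi}_{\LR{r'}(\ball_{2R}\setminus\ball_R)}\leq CR\snorm{\varphi}_{1,r';\ball_{2R}\setminus\ball_R}$ obtained by scaling from $R=1$, valid for all $r'\in(1,\infty)$; combined with $\snorm{\grad\cutoff_R}\leq C/R$ this gives $\snorm{\varphi\grad\cutoff_R}_{r'}\leq C\snorm{\varphi}_{1,r';\Omega^R}\to0$ since $\grad\varphi\in\LR{r'}(\Omega)$, hence $\snorm{(1-\cutoff_R)f}_{-1,r}\to0$.

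\textbf{Step 2: mollification with cut-off near the boundary.} Once $f$ is (approximated by something) compactly supported, say $\supp f\subset\overline{\Omega_{R_0}}$, I would further truncate near $\partial\Omega$: let $\eta_\epsilon\in\CRci(\Omega)$ with $\eta_\epsilon=1$ on $\set{x\in\Omega:\dist(x,\partial\Omega)>\epsilon}$ and $\snorm{\grad\eta_\epsilon}\leq C/\epsilon$. The same annular/Poincaré estimate near $\partial\Omega$ (now using that $\varphi\in\WSRhomN{1}{r'}(\Omega)$ vanishes on $\partial\Omega$, so a genuine Poincaré inequality $\norm{\varphi}_{\LR{r'}(\set{\dist<\epsilon})}\leq C\epsilon\snorm{\grad\varphi}_{r';\set{\dist<\epsilon}}$ holds) shows $\eta_\epsilon f\to f$ in both norms. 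Now $\eta_\epsilon f$ is supported in a fixed compact subset of $\Omega$ at positive distance from $\partial\Omega$; a standard mollification $(\eta_\epsilon f)*\rho_\kappa$ with $\kappa$ small enough keeps the support inside $\Omega$ and converges in $\LR{q}$ and in $\LR{r}$, hence in $\WSRhomN{-1}{r}$ (since on a bounded set $\LR{r}\embeds\WSRhomN{-1}{r}$ continuously — indeed $\snorm{g}_{-1,r}\leq C(R_0)\norm{g}_r$ for $g$ supported in $\Omega_{R_0}$, by Hölder and the Sobolev/Poincaré inequality for the test function). Chaining the three approximations ($\cutoff_R$, then $\eta_\epsilon$, then $\rho_\kappa$) via a diagonal argument completes the proof.

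A remark on efficiency: the whole argument can be phrased as ``$\CRci(\Omega)$ is dense in $\LR{q}\cap\LR{r}_{\mathrm{c}}$ which is dense in $\LR{q}\cap\WSRhomN{-1}{r}$,'' where $\LR{r}_{\mathrm c}$ denotes $\LR{r}$-functions with compact support in $\Omega$; the first density is classical mollification, and the content is the second, which is Step 1 plus Step 2. I would present it in that order, isolating the annular scaling inequality $\norm{\varphi}_{\LR{p}(\ball_{2R}\setminus\ball_R)}\leq CR\,\snorm{\grad\varphi}_{\LR{p}(\ball_{2R}\setminus\ball_R)}$ (valid for $\varphi\in\CRci$, $1\le p<\infty$, $C$ independent of $R$) and its boundary analogue as the one technical lemma, proved by rescaling the unit annulus and using that on a bounded Lipschitz domain with a portion of the boundary where $\varphi$ vanishes one has the Poincaré inequality.
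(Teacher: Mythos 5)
Your Step 1 has a genuine gap at precisely the point you yourself flag as the main obstacle. To conclude $\snorm{(1-\cutoff_R)f}_{-1,r}\to0$ you must control the supremum of $\langle f,(1-\cutoff_R)\varphi\rangle$ over \emph{all} $\varphi\in\CRci(\Omega)$ with $\snorm{\varphi}_{1,r'}\leq1$, but your closing estimate rests on ``$\snorm{\varphi}_{1,r';\Omega^R}\to0$ since $\grad\varphi\in\LR{r'}(\Omega)$'', which is a statement about a \emph{fixed} $\varphi$ and is not uniform over the unit ball: for every $R$ one may take $\varphi$ supported entirely in $\Omega^{2R}$, so that $(1-\cutoff_R)\varphi=\varphi$ and $\snorm{(1-\cutoff_R)\varphi}_{1,r'}=1$. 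As written, the argument only yields the uniform bound $\snorm{(1-\cutoff_R)f}_{-1,r}\leq C\snorm{f}_{-1,r}$ (equivalently, weak-$*$ convergence $\cutoff_Rf\wra f$), not norm convergence. The decay must come from $f$, not from the test function. Two standard repairs: (i) represent the functional as $\langle f,\varphi\rangle=\int_\Omega F\cdot\grad\varphi\,\dx$ with $F\in\LR{r}(\Omega)^n$ (Hahn--Banach plus Riesz on the closed subspace $\grad\CRci(\Omega)\subset\LR{r'}(\Omega)^n$), after which your annular Poincar\'e inequality gives $\snorm{(1-\cutoff_R)f}_{-1,r}\leq C\norm{F}_{r;\Omega^R}\to0$; or (ii) use reflexivity of $\WSRhomN{-1}{r}(\Omega)$ to pass from weak to norm convergence of convex combinations via Mazur's lemma, noting that convex combinations of the $\cutoff_Rf$ are still compactly supported. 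Either fix is routine, but it \emph{is} the content of the step. A second, lesser problem: the proposition is stated for an arbitrary domain, and the collar estimate $\norm{\varphi}_{\LR{r'}(\setc{x}{\dist(x,\partial\Omega)<\epsilon})}\leq C\epsilon\snorm{\grad\varphi}_{r'}$ invoked in Step 2 is a Hardy-type inequality that requires regularity (or a capacity condition) of $\partial\Omega$ and can fail for general domains when $r'\leq n$; your mollification step therefore does not cover the stated generality.

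For comparison, the paper proves the proposition by a purely functional-analytic duality argument: $\LR{q}(\Omega)\cap\WSRhomN{-1}{r}(\Omega)$ is identified with the dual of the (reflexive) sum space $\LR{q'}(\Omega)+\WSRhomN{1}{r'}(\Omega)$; any $g$ in that predual annihilated by every element of $\CRci(\Omega)$ satisfies $\int_\Omega\varphi\,g\,\dx=0$ for all $\varphi\in\CRci(\Omega)$ and hence vanishes, so density follows from Hahn--Banach. This route needs no geometry of $\Omega$, no case distinctions in $r$ versus $n$, and no cutoff estimates. Your constructive scheme, once Step 1 is repaired, has the merit of producing explicit approximants, but it is substantially heavier and Step 2 would still need to be rethought for non-smooth domains.
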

\begin{proof}
The space $\LR{q}(\Omega)\cap\WSRhomN{-1}{r}(\Omega)$ 
can be identified with the dual space of $\LR{q'}(\Omega)+\WSRhomN{1}{r'}(\Omega)$, 
where $s'=s/(s-1)$.
Identifying elements of $\CRci(\Omega)$ with the corresponding functionals,
we consider 
$g\in\LR{q'}(\Omega)+\WSRhomN{1}{r'}(\Omega)$
that is an element of the kernel of each functional in $\CRci(\Omega)$,
\ie,
\[
\int_\Omega \varphi\,g\,\dx=0
\]
for all $\varphi\in\CRci(\Omega)$. 
This implies $g=0$. 
Consequently, by a standard duality argument, 
$\CRci(\Omega)$ is dense in $\LR{q}(\Omega)\cap\WSRhomN{-1}{r}(\Omega)$.
\end{proof}

We recall the notion of weak solutions: 
A pair $\np{\uvel,\upres}\in\WSRhomN{1}{r}(\Omega)^n\times\LRloc{r}(\Omega)$ 
is called \emph{weak solution} to \eqref{sys:Oseen} if 
$\Div\uvel=0$ and
\[
\int_\Omega \grad\uvel:\grad\varphi+\rey\partial_1\uvel\cdot\varphi\,\dx
=\int_\Omega\upres\,\Div\varphi + f\cdot \varphi \,\dx
\]
for all $\varphi\in\CRci(\Omega)^n$ with $\Div\varphi=0$.
We show that weak solutions have better regularity when $f$ is sufficiently regular.

\begin{lem}\label{lem:OseenStrongEstWithErrorTerms}
Let $\Omega\subset\Rn$ be an exterior domain of class $\CR{2}$. 
Let $q,r,s\in(1,\infty)$, $f\in\CRci(\Omega)^n$,
and let $\np{\uvel,\upres}\in\bp{\WSRhom{1}{r}(\Omega)^n\cap\LR{s}(\Omega)^n}\times\LRloc{r}(\Omega)$ 
be a weak solution to \eqref{sys:Oseen}.
Then $\uvel\in\WSRhom{2}{q}(\Omega)^n$, 
$\partial_1\uvel\in\LR{q}(\Omega)^n$ and 
$\upres\in\WSRhom{1}{q}(\Omega)$, 
and for each $R>0$ with $\partial\Omega\subset\ball_R$
there exists 
$C
= C(n,q,\Omega,R)>0$
such that
\begin{align}\label{est:OseenLinearWithErrorTerms}
\snorm{\uvel}_{2,q}
+\rey\norm{\partial_1\uvel}_q
+\snorm{\upres}_{1,q}
\leq C\np{1+\rey^4}\bp{
\norm{f}_q
+\norm{\uvel}_{q;\Omega_R}
+\norm{\upres}_{q;\Omega_R}
}.
\end{align}
\end{lem}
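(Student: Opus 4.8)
The plan is to derive \eqref{est:OseenLinearWithErrorTerms} from the known interior and exterior regularity theory for the Oseen system, combined with a cutoff decomposition that localizes the problem into a part near $\partial\Omega$ (handled by bounded-domain Stokes/Oseen theory) and a part near infinity (handled by the classical whole-space Oseen estimates of, e.g., \cite[Chapter VII]{GaldiBookNew}). First I would fix $R>0$ with $\partial\Omega\subset\ball_R$ and pick a smooth cutoff $\cutoff$ with $\cutoff\equiv1$ on $\ball_{2R}$ and $\supp\cutoff\subset\ball_{3R}$; then $(1-\cutoff)\uvel$, extended by zero, solves the whole-space Oseen problem with right-hand side $(1-\cutoff)f$ plus lower-order commutator terms $-2\grad\cutoff\cdot\grad\uvel-(\Delta\cutoff)\uvel+\rey(\partial_1\cutoff)\uvel+(\grad\cutoff)\upres$ and with the corrected divergence $\grad\cutoff\cdot\uvel$, all of which are supported in the annulus $\Omega_{3R}\setminus\ball_{2R}$. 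Correcting the divergence via the Bogovskii operator $\bogopr$ on that bounded annulus produces an $\WSR{2}{q}$ function controlled by $\norm{\uvel}_{1,q;\Omega_{3R}}$, and then the whole-space $\LR{q}$ Oseen estimate gives
\begin{align*}
\snorm{(1-\cutoff)\uvel}_{2,q;\Rn}
+\rey\norm{\partial_1((1-\cutoff)\uvel)}_{q;\Rn}
+\snorm{\upres}_{1,q;\Omega^{2R}}
\leq C\bp{\norm{f}_q+\norm{\uvel}_{1,q;\Omega_{3R}}+\norm{\upres}_{q;\Omega_{3R}}}.
\end{align*}
Here the constant is $\rey$-independent precisely because the homogeneous Oseen estimate $\snorm{w}_{2,q}+\rey\norm{\partial_1 w}_q\leq C\norm{g}_q$ on $\Rn$ holds with $C$ independent of $\rey>0$ (this is the standard scaling-invariant form; see the references in the introduction). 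For the bounded piece $\cutoff\uvel$, which vanishes on $\partial\Omega$, one uses the $\WSR{2}{q}$ estimate for the stationary Stokes problem on the bounded Lipschitz (here $\CR2$) domain $\Omega_{3R}$: treating $\rey\partial_1(\cutoff\uvel)$ as part of the data and again correcting the divergence with Bogovskii, one gets $\snorm{\cutoff\uvel}_{2,q;\Omega_{3R}}+\snorm{\cutoff\upres}_{1,q;\Omega_{3R}}\leq C\bp{\norm{f}_q+\rey\norm{\partial_1(\cutoff\uvel)}_q+\norm{\uvel}_{1,q;\Omega_{3R}}+\norm{\upres}_{q;\Omega_{3R}}}$, and then $\rey\norm{\partial_1(\cutoff\uvel)}_q\leq\rey\norm{\uvel}_{1,q;\Omega_{3R}}$ is absorbed after interpolating $\norm{\uvel}_{1,q;\Omega_{3R}}$ between $\norm{\uvel}_{2,q;\Omega_{3R}}$ and $\norm{\uvel}_{q;\Omega_{3R}}$ with a parameter chosen so that the top-order term is reabsorbed on the left; this interpolation is the origin of the factor $(1+\rey^4)$ (Young's inequality applied to $\rey\,\snorm{\uvel}_{1,q}\lesssim\rey\,\snorm{\uvel}_{2,q}^{1/2}\norm{\uvel}_{q}^{1/2}\leq\eta\snorm{\uvel}_{2,q}+C\eta^{-1}\rey^2\norm{\uvel}_q$, with a further factor arising when the same device is applied on the whole-space piece).

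Next I would combine the two pieces: $\uvel=\cutoff\uvel+(1-\cutoff)\uvel$ and $\upres$ is likewise split, so the global seminorms $\snorm{\uvel}_{2,q;\Omega}$, $\rey\norm{\partial_1\uvel}_{q;\Omega}$, $\snorm{\upres}_{1,q;\Omega}$ are bounded by the sum of the two estimates above. The bound still contains $\norm{\uvel}_{1,q;\Omega_{3R}}$ rather than just $\norm{\uvel}_{q;\Omega_{3R}}$; to remove the first-derivative term I would again interpolate on the bounded set $\Omega_{3R}$ — $\norm{\uvel}_{1,q;\Omega_{3R}}\leq\eta\snorm{\uvel}_{2,q;\Omega_{3R}}+C(\eta,R)\norm{\uvel}_{q;\Omega_{3R}}$ — and absorb the top-order part into the left-hand side for $\eta$ small. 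Likewise $\norm{\upres}_{q;\Omega_{3R}}$ is already of the form allowed on the right of \eqref{est:OseenLinearWithErrorTerms}. Finally I would note that the qualitative regularity claims ($\uvel\in\WSRhom2q$, $\partial_1\uvel\in\LR q$, $\upres\in\WSRhom1q$) follow from the same localization argument applied with difference quotients, or simply from the a priori estimate together with the interior and boundary regularity theory for the Oseen/Stokes systems, since $f\in\CRci(\Omega)^n$ is smooth; one should also record that the weak-solution hypothesis $\uvel\in\WSRhom1r\cap\LR s$ guarantees $\uvel\in\LR q(\Omega_R)$ and $\grad\uvel\in\LR q(\Omega_R)$ locally (interior $\LR q$ theory plus boundary regularity upgrade the weak solution on the bounded set $\Omega_{3R}$), which is what makes the right-hand side of the localized estimates finite.

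The main obstacle — or at least the point requiring the most care — is tracking the dependence on $\rey$ to get exactly the factor $(1+\rey^4)$: one must use the scaling-invariant whole-space Oseen estimate (so that $C$ there does not blow up as $\rey\to0$ or as $\rey$ grows), and one must be disciplined about where the $\rey$-weighted first-order term $\rey\partial_1$ is treated as data versus kept on the left. The cleanest bookkeeping is: on the exterior piece, keep $\rey\partial_1$ with its natural weight (no loss); on the bounded piece, move $\rey\partial_1(\cutoff\uvel)$ to the right and control it by $\rey\norm{\uvel}_{1,q;\Omega_{3R}}\leq\rey\bp{\eta\snorm{\uvel}_{2,q}+C_\eta\norm{\uvel}_q}$, choosing $\eta\sim\rey^{-1}$ so that $\rey\eta$ is an absorbable constant — this forces $C_\eta\sim\rey$, hence a stray $\rey^2$; redoing the reabsorption of the genuinely top-order $\rey\norm{\partial_1\uvel}_q$ term (which appears on both sides after summing the pieces) costs another factor of $\rey^2$, giving $\rey^4$ overall. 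A secondary technical nuisance is ensuring the Bogovskii corrections do not themselves contribute $\rey$-growth; since they act only on the fixed bounded annulus and only on the low-order quantity $\grad\cutoff\cdot\uvel$, they contribute $C(n,q,\Omega,R)\norm{\uvel}_{1,q;\Omega_{3R}}$, which is then handled by the same interpolation. None of these steps is deep, but assembling them so that every constant is genuinely of the form $C(n,q,\Omega,R)(1+\rey^4)$ is the crux.
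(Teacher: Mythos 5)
Your overall strategy is the same as the paper's: localize with a cutoff, correct the divergence with the Bogovski\u{\i} operator, treat the far field by the whole-space Oseen $\LR{q}$ estimate and the near field by the bounded-domain Stokes estimate with $\rey\partial_1\uvel$ moved to the data, and then remove the first-order error terms on $\Omega_R$ by Ehrling's inequality with an $\rey$-dependent small parameter. Your accounting of the $(1+\rey^4)$ factor also matches the paper's mechanism (a factor $(1+\rey)^2$ from combining the two localized estimates and another $(1+\rey)^2$ from choosing the Ehrling parameter of size $(1+\rey)^{-2}$).

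The one step you gloss over is the application of the whole-space Oseen estimate to the localized function $\vvel=(1-\cutoff)\uvel+\bogopr(\uvel\cdot\grad\cutoff)$. The result you invoke is an existence theorem: it produces \emph{some} solution $(\vvel_1,\vpres_1)\in\WSRhom{2}{q}(\Rn)\times\WSRhom{1}{q}(\Rn)$ of the whole-space problem with the modified right-hand side $F$, satisfying $\snorm{\vvel_1}_{2,q}+\rey\norm{\partial_1\vvel_1}_q+\snorm{\vpres_1}_{1,q}\leq C\norm{F}_q$; it is not an a priori estimate valid for every solution in your integrability class, so you cannot simply write the bound for $(1-\cutoff)\uvel$ itself. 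To transfer the estimate one must show that $\wvel\coloneqq\vvel-\vvel_1$, which solves the homogeneous Oseen system in $\Rn$, has vanishing second derivatives and vanishing $\partial_1\wvel$: the paper does this by noting that $\wvel$ is a polynomial (Fourier transform), and then killing the relevant derivatives using the pointwise decay $\D^\alpha\uvel(x)\to0$ as $\snorm{x}\to\infty$ (a consequence of $f\in\CRci(\Omega)^n$) together with $\D^\alpha\vvel_1\in\LR{q}(\Rn)$ for $\snorm{\alpha}=2$. Without some such Liouville-type uniqueness argument the far-field half of your estimate is not justified; everything else in your outline is sound and follows the paper's route.
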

\begin{proof}
By \cite[Theorem VII.1.1]{GaldiBookNew}, we have
$\uvel\in\WSRloc{2}{q}(\Omega)\cap\CRi(\Omega)$ and $\upres\in\WSRloc{1}{q}(\Omega)\cap\CRi(\Omega)$. 
Let $0<R_0<R_1<R$ such that $\partial\ball_{R_0}\subset\Omega$, 
and let $\cutoff\in\CRci(\ball_{R_1})$ with $\cutoff\equiv 1$ on $\ball_{R_0}$. 
We set $\vvel\coloneqq\np{1-\cutoff}\uvel+\bogopr\np{\uvel\cdot\grad\cutoff}$, 
where $\bogopr$ denotes the Bogovski\u{\i} operator,
and $\vpres\coloneqq\np{1-\cutoff}\upres$.
Then $\vvel\in\WSRloc{2}{q}(\Rn)\cap\WSRhom{1}{r}(\Rn)\cap\LR{s}(\Rn)$
and $\vpres\in\WSRloc{1}{q}(\Rn)$ satisfy
\begin{align}\label{sys:OseenWholeSpace}
\begin{pdeq}
-\Delta\vvel+\rey\partial_1\vvel+\grad\vpres&= F 
& \tin \Rn, \\
\Div\vvel&=0
& \tin \Rn
\end{pdeq}
\end{align}
with
\[
F
\coloneqq\np{1-\cutoff}f
+2\grad\cutoff\cdot\grad\uvel
+\Delta\cutoff\uvel
-\rey\partial_1\cutoff\uvel
+\upres\grad\cutoff
+\bb{-\Delta+\rey\partial_1}\bogopr\np{\uvel\cdot\grad\cutoff}.
\]
By \cite[Theorem VII.7.1]{GaldiBookNew},
there exists a solution $\np{\vvel_1,\vpres_1}\in\WSRhom{2}{q}(\Omega)\times\WSRhom{1}{q}(\Omega)$ 
to \eqref{sys:OseenWholeSpace} that satisfies
\begin{align}\label{est:OseenWholeSpace}
\snorm{\vvel_1}_{2,q}
+\rey\norm{\partial_1\vvel_1}_q
+\snorm{\vpres_1}_{1,q}
\leq C\norm{F}_q.
\end{align}
Now set $\wvel\coloneqq\vvel-\vvel_1$. 
Then $\wvel$ is a solution to the homogeneous Oseen system
in the whole space.
Therefore, $\wvel=\vvel-\vvel_1$ is a polynomial,
which can be readily shown with the help of Fourier transform.
From \cite[Theorem VII.6.1]{GaldiBookNew} and $f\in\CRci(\Omega)$ 
we conclude $\D^\alpha\uvel(x)\to 0$ and thus $\D^\alpha\vvel(x)\to 0$ as $\snorm{x}\to\infty$
for each $\alpha\in\N_0^n$.
In virtue of $\D^\alpha\vvel_1\in\LR{q}(\Omega)$ for $\snorm{\alpha}=2$, 
the polynomial $\D^\alpha\wvel=\D^\alpha\vvel-\D^\alpha\vvel_1$ must thus be zero, \ie,
$\D^\alpha\wvel=0$ for $\snorm{\alpha}=2$. 
In the same way we conclude $\partial_1\wvel=0$ and, in consequence, $\grad\wpres=0$.
Hence we can replace $\np{\vvel_1,\vpres_1}$ 
with $\np{\vvel,\vpres}$ in estimate \eqref{est:OseenWholeSpace}.
Since $\uvel=\vvel$ and $\upres=\vpres$ on $\Omega^{R_1}$,
estimate \eqref{est:OseenWholeSpace} thus implies
\begin{align}\label{est:OseenLinearEstFarAway}
\begin{split}
\snorm{\uvel}_{2,q;\Omega^{R_1}}
+\rey\norm{\partial_1\uvel}_{q;\Omega^{R_1}}
+\snorm{\upres}_{1,q;\Omega^{R_1}}
&\leq\snorm{\vvel}_{2,q}
+\rey\norm{\partial_1\vvel}_q
+\snorm{\vpres}_{1,q}\\
&\leq C\bp{
\norm{f}_q
+\np{1+\rey}\norm{\uvel}_{1,q;\Omega_{R_1}}
+\norm{\upres}_{q;\Omega_{R_1}}
}.
\end{split}
\end{align}
To derive the estimate near the boundary, we use another cut-off function 
$\cutoff_1\in\CRci(\ball_R)$ with $\cutoff_1\equiv1$ on $\ball_{R_1}$,
and we set $\vvel\coloneqq\cutoff_1\uvel$
and $\vpres\coloneqq\cutoff_1\upres$.
Then $\np{\vvel,\vpres}\in\WSR{2}{q}(\Omega)\times\WSR{1}{q}(\Omega)$ 
is a solution to
\[
\begin{pdeq}
-\Delta\vvel+\grad\vpres&= 
\cutoff_1 f 
-2\grad\cutoff_1\cdot\grad\uvel-\Delta\cutoff_1\uvel 
-\cutoff_1\rey\partial_1\uvel+\upres\grad\cutoff_1
& \tin \Omega_R, \\
\Div\vvel&=\uvel\cdot\grad\cutoff_1
& \tin \Omega_R, \\
\vvel&=0 
& \ton \partial\Omega_R.
\end{pdeq}
\]
It is well known (see \cite[Exercise IV.6.3]{GaldiBookNew} for example)
that then $\np{\vvel,\vpres}$ is subject to the estimate
\[
\norm{\vvel}_{2,q}+\norm{\grad\vpres}_q
\leq C\bp{
\norm{f}_{q;\Omega_R}
+\np{1+\rey}\norm{\uvel}_{1,q;\Omega_R}
+\norm{\upres}_{q;\Omega_R}
}.
\]
Since $\uvel=\vvel$ and $\upres=\vpres$ on $\Omega_{R_1}$, 
a combination of this estimate with \eqref{est:OseenLinearEstFarAway} yields
\[
\snorm{\uvel}_{2,q}
+\rey\norm{\partial_1\uvel}_q
+\snorm{\upres}_{1,q}
\leq C\np{1+\rey}\bp{
\norm{f}_q
+\np{1+\rey}\norm{\uvel}_{1,q;\Omega_R}
+\norm{\upres}_{q;\Omega_R}
}.
\]
Finally, an application of Ehrling's inequality
\[
\snorm{\uvel}_{1,q;\Omega_R}
\leq C\np{
\varepsilon^{-1}\norm{\uvel}_{q;\Omega_R}
+\varepsilon\snorm{\uvel}_{2,q;\Omega_R}
}
\]
(see \cite[Theorem 5.2]{AdamsFournier_SobolevSpaces_2ndEd})
for $\varepsilon>0$ sufficiently small leads to \eqref{est:OseenLinearWithErrorTerms}.
\end{proof}

\begin{proof}[Proof of Theorem \ref{thm:OseenLinear}]
For the moment, consider $f\in\CRci(\Omega)$.
The existence of a weak solution $\np{\uvel,\upres}$ to \eqref{thm:OseenLinear} with 
$\uvel\in\WSRhom{1}{r}(\Omega)\cap\LR{s}(\Omega)$
satisfying \eqref{est:OseenLinearEstWeak}
follows from \cite[Theorem 2.2]{KimKim_LqEstStatOseen}. 
In the case $q>\frac{n}{n-1}$, one shows $\upres\in\LR{r}(\Omega)$ and
$\norm{\upres}_r\leq C\rey^{-\frac{M}{n+1}}\snorm{f}_{-1,r}$
in the same way as in \cite[Proof of Theorem VII.7.2]{GaldiBookNew}.
Then, for $\varphi\in\CRci(\Omega)^n$ we have
\[
\rey \int_\Omega \partial_1\uvel\cdot\varphi\,\dx 
=\int_\Omega \bp{
f \cdot \varphi
+\upres\Div\varphi
-\grad\uvel:\grad\varphi
}\,\dx
\leq \bp{\snorm{f}_{-1,r}
+\norm{\upres}_r
+\norm{\grad\uvel}_r}
\norm{\grad\varphi}_{r'},
\]
where $r'=r/(r-1)$,
which implies $\partial_1\uvel\in\WSRhomN{-1}{r}(\Omega)$ and
\[
\rey\snorm{\partial_1\uvel}_{-1,r}
\leq C\bp{\snorm{f}_{-1,r}
+\norm{\upres}_r
+\norm{\grad\uvel}_r}
\leq C\rey^{-\frac{M}{n+1}}\snorm{f}_{-1,r}.
\]
This shows \eqref{est:OseenLinearEstPressure} if $r>\frac{n}{n-1}$.
If this is not the case, we instead obtain a local estimate in the following way: 
First of all, by \cite[Lemma VII.1.1]{GaldiBookNew} we have $\upres\in\LR{r}(\Omega_R)$
for all $R>0$ with $\partial\ball_R\subset\Omega$.
For fixed $R$, we can add a constant to $\upres$ such that $\int_{\Omega_R}\upres=0$.
Now let $\psi\in\WSRN{1}{r'}(\Omega_R)^n$, $r'=r/(r-1)$, be a solution to the problem
\[
\Div\psi
=\snorm{\upres}^{r-2}\upres
-\frac{1}{\snorm{\Omega_R}}\int_{\Omega_R}\snorm{\upres}^{r-2}\upres\,\dx
=: g 
\qquad \tin \Omega_R,
\]
which exists 
since $g$ has vanishing mean value and satisfies $g\in\LR{r'}(\Omega_R)$
(see \cite[Theorem III.3.6]{GaldiBookNew} for example).
Moreover, we have
\[
\norm{\psi}_{1,r';\Omega_R}
\leq C\norm{g}_{r';\Omega_R}
\leq C\norm{\upres}_{r;\Omega_R}^{r-1}.
\]
Since $\np{\uvel,\upres}$ is a weak solution and $\upres$ has vanishing mean value on $\Omega_R$, 
we deduce
\begin{align*}
\norm{\upres}_{r;\Omega_R}^r
&=\int_{\Omega_R}\upres\Div\psi\,\dx
+\int_{\Omega_R}\upres\,\dx\ \frac{1}{\snorm{\Omega_R}}\int_{\Omega_R}\snorm{\upres}^{r-2}\upres\,\dx\\
&=\int_{\Omega_R} \grad\uvel:\grad\psi-\rey\partial_1\uvel\cdot\psi-f\cdot\psi\,\dx\\
&\leq C\np{1+\rey_0}\bp{\norm{\grad\uvel}_{r}+\norm{f}_{-1,r;\Omega_R}}\norm{\psi}_{1,r';\Omega_R}\\
&\leq C\np{1+\rey_0}\bp{\norm{\grad\uvel}_{r}+\snorm{f}_{-1,r}}\norm{\upres}_{r;\Omega_R}^{r-1}.
\end{align*}
Using estimate \eqref{est:OseenLinearEstWeak}, this leads to
\begin{align}\label{est:OseenLinearEstPressureLocal}
\norm{\upres}_{r;\Omega_R}
\leq C\rey^{-\frac{M}{n+1}}\snorm{f}_{-1,r}.
\end{align}
Next, by Lemma \ref{lem:OseenStrongEstWithErrorTerms},
from $f\in\CRci(\Omega)$ we conclude 
$\uvel\in\WSRhom{2}{q}(\Omega)$ and $\upres\in\WSRhom{1}{q}(\Omega)$ and the validity 
of \eqref{est:OseenLinearWithErrorTerms}.
We apply the estimate 
\[
\norm{\uvel}_{q;\Omega_R}
\leq C(\varepsilon)\norm{\uvel}_{\sigma;\Omega_R}
+\varepsilon\snorm{\uvel}_{1,q;\Omega_R}
\]
for $\varepsilon>0$ and $\sigma\in(1,\infty)$ several times to deduce
\begin{align*}
\norm{\uvel}_{q;\Omega_R}
&\leq  C(\varepsilon)\norm{\uvel}_s
+ C(\varepsilon)\snorm{\uvel}_{1,r}
+\varepsilon\snorm{\uvel}_{2,q},
\\
\norm{\upres}_{q;\Omega_R}
&\leq  C(\varepsilon)\norm{\upres}_{r;\Omega_R}
+\varepsilon\snorm{\upres}_{1,q}.
\end{align*}
Choosing $\varepsilon>0$ sufficiently small and 
combining these with the estimates \eqref{est:OseenLinearWithErrorTerms},
\eqref{est:OseenLinearEstWeak} and \eqref{est:OseenLinearEstPressureLocal}, 
we conclude \eqref{est:OseenLinearEstStrong} for $f\in\CRci(\Omega)$.
Employing the above estimates and Proposition \ref{prop:DensityInLqD-1r},
we can finally extend the result to general $f\in\LR{q}(\Omega)\cap\WSRhomN{-1}{r}(\Omega)$
by a standard density argument.

Moreover, the additional assumption $s\leq q$ yields the embedding
\[
\LR{s}(\Omega)\cap\WSRhom{2}{q}(\Omega)\embeds\WSR{2}{q}(\Omega),
\]
so that $\uvel\in\WSR{2}{q}(\Omega)$ in this case,
and the Gagliardo--Nirenberg inequality
(see \cite{MaremontiCrispo_InterpolationInequalityExteriorDomains})
implies
\[
\norm{\uvel}_{q}
\leq C\norm{\uvel}_{s}^\theta
\snorm{\uvel}_{2,q}^{1-\theta}
\leq C\rey^{-\frac{\np{1+\delta}\theta}{n+1}}
\bp{\norm{f}_q+\rey^{-\frac{M}{n+1}}\snorm{f}_{-1,r}}
\]
and
\[
\snorm{\uvel}_{1,q}
\leq C\norm{\uvel}_q^{1/2}\snorm{\uvel}_{2,q}^{1/2}
\leq C\rey^{-\frac{\np{1+\delta}\theta}{2(n+1)}}
\bp{\norm{f}_q+\rey^{-\frac{M}{n+1}}\snorm{f}_{-1,r}},
\]
where we used \eqref{est:OseenLinearEstStrong}.
This shows estimate \eqref{est:OseenLinearEstFullNorm}
and completes the proof.
\end{proof}

Now let us turn to the time-periodic Oseen problem \eqref{sys:OseenTP}.
We recall the following result, which treats the case $\proj f=0$.

\begin{thm}\label{thm:OseenLinearTPPurelyOsc}
Let $\Omega\subset\Rn$, $n\geq 2$, be an exterior domain of class $\CR{2}$, $q\in(1,\infty)$ and 
$\rey\in[0,\rey_0]$, $\rey_0>0$.
For any $f\in\LRpercompl{q}(\R\times\Omega)^n$
there is a solution 
\[
\np{\uvel,\upres}\in\WSRpercompl{1,2}{q}(\R\times\Omega)^n\times\LRpercompl{q}(\R;\WSRhom{1}{q}(\Omega))
\]
to \eqref{sys:OseenTP},
which satisfies
\begin{align}\label{est:OseenLinearTPPurelyOsc}
\norm{\uvel}_{1,2,q}
+\norm{\grad\upres}_q
\leq C\norm{f}_q
\end{align}
for a constant 
$ C= C\np{n,q,\Omega,\rey_0}>0$.
If 
$\np{\uvel_1,\upres_1}\in\WSRpercompl{1,2}{q}(\R\times\Omega)^n\times\LRpercompl{q}(\R;\WSRhom{1}{q}(\Omega))$
is another solution to \eqref{sys:OseenTP}, then $\uvel=\uvel_1$ and $\upres=\upres_1+\upres_0$ for 
some $\per$-periodic function $\upres_0\colon\R\to\R$.
\end{thm}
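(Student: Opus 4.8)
The plan is to follow the pattern of the proof of Lemma~\ref{lem:OseenStrongEstWithErrorTerms} — a whole-space estimate, a bounded-domain estimate, and a localization argument — carried over to the time-periodic setting, where the decisive structural point is that on the purely oscillatory part the operator $\partial_t-\Delta+\rey\partial_1$ behaves \emph{parabolically}: on every nonzero time frequency its symbol is invertible, so one gets genuine $\LR{q}$--$\LR{q}$ maximal regularity with no loss of spatial integrability, which is precisely why $\uvel$ lands in the full space $\WSRpercompl{1,2}{q}(\R\times\Omega)$ and not in homogeneous Sobolev spaces. First I would treat $\Omega=\Rn$. Regarding the time axis as the torus $\mathbb{T}\coloneqq\R/\per\Z$ one works on $G\coloneqq\mathbb{T}\times\Rn$, whose dual group is $\Z\times\Rn$; after the $G$-Fourier transform and composition with the (bounded) Leray projection symbol, \eqref{sys:OseenTP} with $\proj f=0$ reduces, on the $k$-th mode with $k\neq0$, to division by the scalar $\tfrac{2\pi i k}{\per}+\snorm{\xi}^2+i\rey\xi_1$. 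Since $k\neq0$ this never vanishes, and a short computation gives $\snorml{\tfrac{2\pi i k}{\per}+\snorm{\xi}^2+i\rey\xi_1}\geq c(\per,\rey_0)\bp{\snorm{k}+\snorm{\xi}^2}$ uniformly for $\rey\in[0,\rey_0]$, so the symbols representing $\partial_t\uvel$, $\grad^2\uvel$, $\rey\partial_1\uvel$ and $\grad\upres$ are bounded and obey Marcinkiewicz/Mikhlin-type derivative bounds uniformly in $\rey$. A de~Leeuw-type transference (from $\R\times\Rn$ to $\Z\times\Rn$) then shows these are $\LR{q}(G)$-Fourier multipliers with $\rey$-uniform norms, which gives existence and uniqueness of the whole-space solution together with \eqref{est:OseenLinearTPPurelyOsc} for $\Omega=\Rn$ (this whole-space statement is standard and may also simply be cited).

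Next I would record the corresponding bounded-domain fact: the time-periodic Stokes problem with homogeneous Dirichlet data on a bounded domain has $\LR{q}$ maximal regularity — a consequence of the boundedness of the Stokes semigroup together with $\tfrac{2\pi i k}{\per}$, $k\neq0$, lying in the resolvent set of the Stokes operator (periodic maximal regularity in the spirit of Arendt--Bu), the first-order term $\rey\partial_1\uvel$ being lower order on a bounded set. Then, exactly as in the proof of Lemma~\ref{lem:OseenStrongEstWithErrorTerms}, I would localize with cut-off functions and the Bogovski\u{\i} operator: one piece of $\uvel$, extended by zero, solves a time-periodic Oseen problem in $\Rn$ and is controlled by Step~1, while the remainder, supported near $\partial\Omega$, solves a time-periodic Stokes problem in $\Omega_R$ and is controlled by the bounded-domain estimate; all lower-order contributions are supported in a fixed bounded region and are absorbed via Ehrling's inequality (the $\rey$-dependent constants thereby produced are harmless since the final constant is allowed to depend on $\rey_0$). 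This yields, for \emph{every} solution $(\uvel,\upres)$ in the class of the theorem and all sufficiently large $R$,
\[
\norm{\uvel}_{1,2,q}+\norm{\grad\upres}_q\leq C\bp{\norm{f}_q+\norm{\uvel}_{\LRper{q}(\R\times\Omega_R)}+\norm{\upres}_{\LRper{q}(\R\times\Omega_R)}}.
\]

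It remains to remove the two bounded-set error terms, which I would do by a compactness/contradiction argument: a normalized sequence of solutions whose right-hand sides tend to zero in $\LRper{q}$ is bounded in $\WSRper{1,2}{q}(\R\times\Omega)$ by the above estimate while the error terms stay bounded below; the Aubin--Lions-type compact embedding $\WSRper{1,2}{q}(\R\times\Omega_R)\embeds\embeds\LRper{q}(\R\times\Omega_R)$, together with a normalization of the spatial mean of the pressures on $\Omega_R$, then produces in the limit a nontrivial solution of the \emph{homogeneous} time-periodic Oseen problem in the class of the theorem, contradicting uniqueness. Uniqueness of the homogeneous problem I would obtain by duality: testing it against the solution $\phi$ of the adjoint system $-\partial_t\phi-\Delta\phi-\rey\partial_1\phi+\grad\pi=g$ (which has the same structure and is solved by Steps~1--2), every term integrates by parts with no boundary contribution — in time by periodicity, in space because $\uvel$ and $\phi$ vanish on $\partial\Omega$ and decay — leaving $\int_0^\per\!\!\int_\Omega\uvel\cdot g=0$ for all $g\in\CRci(\R\times\Omega)^n$, whence $\uvel=0$ and then $\grad\upres=0$, \ie, $\upres=\upres_0(t)$, which is the asserted form. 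The main obstacle is Step~1: showing that the Oseen symbols are $\LR{q}(G)$-multipliers with norm bounded \emph{uniformly} in $\rey\in[0,\rey_0]$, since both the pointwise lower bound on the symbol and the Mikhlin-type derivative estimates are delicate due to the competition between $\rey\xi_1$, $\snorm{\xi}^2$ and $\snorm{k}$; the localization in Steps~2--3 is, by comparison, routine once the two local building blocks are in place.
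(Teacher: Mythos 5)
The paper does not actually prove this theorem: its entire ``proof'' is a citation of \cite[Theorem 5.1]{GaldiKyed_TPflowViscLiquidpBody} for $n=3$, together with the remark that the case $n\geq 2$ is proved along the same lines. What you have written is, in substance, a reconstruction of the proof of that cited result: Galdi and Kyed also work on the group $\mathbb{T}\times\R^n$, invert the symbol $\tfrac{2\pi ik}{\per}+\snorm{\xi}^2+i\rey\xi_1$ on the nonzero modes, obtain the $\rey$-uniform lower bound $\geq c\bp{\snorm{k}+\snorm{\xi}^2}$ by exactly the dichotomy you indicate, transfer the resulting multipliers via de~Leeuw, and then pass to the exterior domain by cut-off/Bogovski\u{\i} localization, a compactness--contradiction argument to absorb the lower-order terms, and a duality argument for uniqueness. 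So your route is not different from the one the paper relies on; it is the one the paper delegates to the literature. Your outline is sound at the level of a sketch. Two points deserve explicit care if you were to write it out: (i) in Step~1 the symbol, viewed on $\R\times\Rn$ before transference, is singular at the origin $(\eta,\xi)=(0,0)$, so one must insert a smooth cut-off in the time frequency that vanishes near $\eta=0$ and equals $1$ for $\snorm{\eta}\geq 2\pi/\per$ before applying Mikhlin and de~Leeuw --- this is harmless precisely because $\proj f=0$ removes the $k=0$ mode, but it is where the restriction to the purely oscillatory part enters; (ii) in the contradiction argument the limit could a priori have vanishing velocity but nonvanishing pressure, so the mean-zero normalization of the pressure on $\Omega_R$ that you mention is genuinely needed to turn $\grad\upres=0$ into $\upres=0$ and reach the contradiction. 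With those caveats the plan is correct and matches the method behind the paper's citation.
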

\begin{proof}
The result for $n=3$ has been established in \cite[Theorem 5.1]{GaldiKyed_TPflowViscLiquidpBody}.
The general case $n\geq 2$ is proved along the same lines.
\end{proof}

A combination of Theorem \ref{thm:OseenLinear} and Theorem \ref{thm:OseenLinearTPPurelyOsc}
allows us to treat general time-periodic forcing terms and
immediately leads to a proof of Theorem \ref{thm:OseenLinearTPfull}.

\begin{proof}[Proof of Theorem \ref{thm:OseenLinearTPfull}]
Set $f_1\coloneqq\proj f$ and $f_2\coloneqq\projcompl f$.
Let
$\np{\vvel,\vpres}\in\bp{\LR{s}\cap\WSRhom{2}{q}(\Omega)}\times\WSRhom{1}{q}(\Omega)$ 
be a solution to \eqref{sys:Oseen} with right-hand side $f=f_1$
that exists due to Theorem \ref{thm:OseenLinear}.
Moreover, let
$\np{\wvel,\wpres}\in\WSRpercompl{1,2}{q}(\R\times\Omega)\times
\LRpercompl{q}(\R;\WSRhom{1}{q}(\Omega))$ be a solution to \eqref{sys:OseenTP} 
with right-hand side $f=f_2$
that exists due to Theorem \ref{thm:OseenLinearTPPurelyOsc}.
Then $\np{\uvel,\upres}\coloneqq\np{\vvel+\wvel,\vpres+\wpres}$
is a solution to \eqref{sys:OseenTP}
with the asserted properties.
The uniqueness statement is deduced in a similar way.
\end{proof}

\section{Proofs of Theorem \ref{thm:OseenNonlinear} and Theorem \ref{thm:OseenNonlinearTP}}

In the following we focus on the time-periodic case 
and the proof of Theorem \ref{thm:OseenNonlinearTP}.
The proof of Theorem \ref{thm:OseenNonlinear} is very similar but less involved,
and we will sketch it at the end of this section.

First,
we reformulate \eqref{sys:NavierStokesOseenTP} 
as a problem with homogeneous boundary conditions.
For this purpose, let $R>0$ with $\partial\ball_R\subset\Omega$, 
and let $\varphi\in\CRci(\Rn)$ with $\varphi\equiv 1$ on $\ball_R$.
We define the function $\Vvel\colon\Rn\to\Rn$ by
\[
\Vvel(x)=\frac{\rey}{2}\bb{-\Delta+\grad\Div}\np{\varphi(x) x_2^2\eone}.
\]
Then $\Div\Vvel\equiv 0$ and $\Vvel(x)=-\rey\eone$ for $x\in\partial\Omega$,
and $\Vvel$ obeys the estimate
\begin{align}\label{est:BoundaryLiftTP}
\norm{-\Delta\Vvel+\rey\partial_1\Vvel}_q
+\snorm{-\Delta\Vvel+\rey\partial_1\Vvel}_{-1,r}
\leq C\rey\np{1+\rey}.
\end{align}
We set $\uvel(t,x)\coloneqq\vvel(t,x)-\Vvel(x)$, $\upres\coloneqq\vpres$. 
Then $(\vvel,\vpres)$ is a $\per$-time-periodic solution to \eqref{sys:NavierStokesOseenTP}
if and only if $(\uvel,\upres)$ is a $\per$-time-periodic solution to
\begin{align}\label{sys:NavierStokesOseenTPAfterLifting}
\begin{pdeq}
\partial_t\uvel-\Delta\uvel+\rey\partial_1\uvel+\grad\upres
&= f + \caln(\uvel)
& \tin \R\times\Omega, \\
\Div\uvel&=0
& \tin \R\times\Omega, \\
\uvel&=0
&\ton \R\times\partial\Omega, \\
\lim_{\snorm{x}\to\infty} \uvel(x)&=0,
\end{pdeq}
\end{align}
where 
\[
\caln(\uvel)
=
-\uvel\cdot\grad\uvel-\uvel\cdot\grad\Vvel-\Vvel\cdot\grad\uvel-\Vvel\cdot\grad\Vvel
+\Delta\Vvel-\rey\partial_1\Vvel.
\]
We will show existence of a solution to \eqref{sys:NavierStokesOseenTPAfterLifting}
in the function space 
\begin{align*}
\Xoseen{q,r}&\coloneqq\setcl{\uvel\in\WSRper{1,2}{q}(\R\times\Omega)^n}
{\Div\uvel=0,\,\restriction{\uvel}{\R\times\partial\Omega}=0,\,\norm{\proj\uvel}_{\rey}<\infty},
\\
\norm{\vvel}_{\rey}
&\coloneqq\snorm{\vvel}_{2,q}
+\snorm{\vvel}_{1,r}
+\rey^{\frac{1}{n+1}}\norm{\vvel}_s, \qquad 
s\coloneqq\frac{(n+1)r}{n+1-r},
\end{align*} 
which we equip with the norm 
\[
\norm{\uvel}_{\Xoseen{q,r}}\coloneqq
\norm{\proj\uvel}_{\rey}+\norm{\projcompl\uvel}_{1,2,q}.
\]
Then 
$\Xoseen{q,r}$ is a Banach space since $s\leq q$ by \eqref{cond:range_r_nonlinearTP}.
The following lemma enables us to derive suitable estimates for $\caln(\uvel)$
when $\uvel\in\Xoseen{q,r}$.

\begin{lem}\label{lem:NonlinearEstimatesTP}
Let $q,r\in(1,\infty)$ satisfy \eqref{cond:range_q_nonlinearTP} and \eqref{cond:range_r_nonlinearTP},
$0<\rey\leq\rey_0$, and let 
$\uvel_1,\uvel_2\in\Xoseen{q,r}$.
Set $\vvel_j\coloneqq\proj\uvel_j$, $\wvel_j\coloneqq\projcompl\uvel_j$ for $j=1,2$.
Then
\begin{align}
\norm{\vvel_1\cdot\grad\vvel_2}_q 
&\leq C\rey^{-\frac{\theta}{n+1}}\norm{\vvel_1}_{\rey}\norm{\vvel_2}_{\rey}, \label{est:NonlinearEstimateSteadyStateStrong}
\\
\snorm{\vvel_1\cdot\grad\vvel_2}_{-1,r}
&\leq C\rey^{-\frac{\eta}{n+1}}\norm{\vvel_1}_{\rey}\norm{\vvel_2}_{\rey},
\label{est:NonlinearEstimateSteadyStateWeak}
\\
\norm{\wvel_1\cdot\grad\wvel_2}_{q}
&\leq C\norm{\wvel_1}_{1,2,q}\norm{\wvel_2}_{1,2,q},
\label{est:NonlinearEstimatePurelyPerStrong}
\\
\snorm{\proj\np{\wvel_1\cdot\grad\wvel_2}}_{-1,r}
&\leq C\norm{\wvel_1}_{1,2,q}\norm{\wvel_2}_{1,2,q},
\label{est:NonlinearEstimatePurelyPerWeak}
\\
\norm{\vvel_1\cdot\grad\wvel_2}_{q}
&\leq C\rey^{-\frac{\zeta}{n+1}}\norm{\vvel_1}_{\rey}\norm{\wvel_2}_{1,2,q},
\label{est:NonlinearEstimateMixed1}
\\
\norm{\wvel_1\cdot\grad\vvel_2}_{q}
&\leq C\rey^{-\frac{\zeta}{n+1}}\norm{\wvel_1}_{1,2,q}\norm{\vvel_2}_{\rey},
\label{est:NonlinearEstimateMixed2}
\end{align}
where $\zeta\in[0,1)$ and $\theta,\eta\in[0,2]$. 
Moreover, $\eta=2$ if and only if $r=(n+1)/2$.
\end{lem}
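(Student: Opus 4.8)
The six estimates all have the same structure: bound a product $a \cdot \grad b$ in either $\LR{q}$ or the weaker norm $\snorm{\,\cdot\,}_{-1,r}$ by the product of the respective norms of $a$ and $b$. The uniform device throughout is Hölder's inequality together with suitable Sobolev embeddings, and the powers of $\rey$ appear only when an interpolation of the type used in Theorem \ref{thm:OseenLinear} (Gagliardo--Nirenberg, with one factor being the $\LR{s}$-norm of the steady part) is invoked; the $\LR{s}$-norm carries the weight $\rey^{1/(n+1)}$ in the definition of $\norm{\,\cdot\,}_{\rey}$, and each time we "spend" a fraction of that norm in an interpolation we pick up a negative power of $\rey$, hence the exponents $\theta,\eta,\zeta$ written as interpolation parameters in $[0,1]$ or $[0,2]$.

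\textbf{Steady--steady terms \eqref{est:NonlinearEstimateSteadyStateStrong}--\eqref{est:NonlinearEstimateSteadyStateWeak}.} Here I would first record the embeddings available for a function $\vvel$ with finite $\norm{\vvel}_{\rey}$-norm: from $\grad^2\vvel\in\LR{q}$, $\grad\vvel\in\LR{r}$, $\vvel\in\LR{s}$ (with $s=(n+1)r/(n+1-r)$) one obtains, by Sobolev embedding and interpolation on an exterior domain (using \cite{MaremontiCrispo_InterpolationInequalityExteriorDomains} as in the proof of Theorem \ref{thm:OseenLinear}), that $\vvel$ and $\grad\vvel$ lie in a scale of $\LR{p}$-spaces, and $\grad\vvel\in\LR{q}$ after interpolating $\snorm{\vvel}_{1,r}$ against $\snorm{\vvel}_{2,q}$. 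For \eqref{est:NonlinearEstimateSteadyStateStrong} I would split $\norm{\vvel_1\cdot\grad\vvel_2}_q\le\norm{\vvel_1}_{p_1}\norm{\grad\vvel_2}_{p_2}$ with $1/p_1+1/p_2=1/q$, choose $p_1,p_2$ admissible under \eqref{cond:range_q_nonlinearTP}--\eqref{cond:range_r_nonlinearTP}, and estimate each factor by $\norm{\vvel_j}_{\rey}$ via the embeddings; the exponent $\theta$ is precisely the interpolation weight attached to the $\LR{s}$-factor, and since that factor entered $\norm{\vvel}_{\rey}$ pre-multiplied by $\rey^{1/(n+1)}$, extracting it unweighted costs $\rey^{-\theta/(n+1)}$. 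For \eqref{est:NonlinearEstimateSteadyStateWeak} I would instead use that $\vvel_1\cdot\grad\vvel_2=\Div(\vvel_1\otimes\vvel_2)$ (since $\Div\vvel_1=0$), so $\snorm{\vvel_1\cdot\grad\vvel_2}_{-1,r}\le\norm{\vvel_1\otimes\vvel_2}_{r}\le\norm{\vvel_1}_{p_1}\norm{\vvel_2}_{p_2}$ with $1/p_1+1/p_2=1/r$, and again bound each factor through the $\norm{\,\cdot\,}_{\rey}$-scale; now \emph{both} factors may have to draw on $\LR{s}$, giving $\eta$ up to $2$, with $\eta=2$ exactly when the only admissible choice is $p_1=p_2=s$, which (by $2/s=1/r$) is equivalent to $r=(n+1)/2$.

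\textbf{Purely oscillatory and mixed terms \eqref{est:NonlinearEstimatePurelyPerStrong}--\eqref{est:NonlinearEstimateMixed2}.} For the purely oscillatory terms I would use the parabolic embedding of the maximal-regularity space $\WSRper{1,2}{q}(\R\times\Omega)$ into anisotropic Lebesgue spaces $\LRper{\rho}(\R;\LR{p}(\Omega))$ (mixed-norm Gagliardo--Nirenberg / Ladyzhenskaya-type inequalities in the parabolic scaling), so that $\wvel_j$ and $\grad\wvel_j$ lie in appropriate such spaces; Hölder in both $t$ and $x$ then yields \eqref{est:NonlinearEstimatePurelyPerStrong}, and for \eqref{est:NonlinearEstimatePurelyPerWeak} one writes $\proj(\wvel_1\cdot\grad\wvel_2)=\proj\,\Div(\wvel_1\otimes\wvel_2)=\Div\,\proj(\wvel_1\otimes\wvel_2)$ and bounds $\norm{\proj(\wvel_1\otimes\wvel_2)}_{r}\le\norm{\wvel_1\otimes\wvel_2}_{\LRper{1}(\R;\LR{r})}$ (averaging in time only improves integrability), then Hölder. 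No $\rey$-powers arise because the $\WSRpercompl{1,2}{q}$-norm carries no $\rey$-weight. The mixed terms \eqref{est:NonlinearEstimateMixed1}--\eqref{est:NonlinearEstimateMixed2} combine the two schemes: $\vvel_1$ is estimated in a time-independent $\LR{p_1}(\Omega)$ via the $\norm{\,\cdot\,}_{\rey}$-scale, $\grad\wvel_2$ (resp. $\wvel_1$, $\grad\vvel_2$) in a mixed norm $\LRper{q}(\R;\LR{p_2}(\Omega))$, and Hölder in $x$ with $1/p_1+1/p_2=1/q$ closes it; the single $\LR{s}$-factor contributes the weight $\zeta<1$.

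\textbf{Main obstacle.} The routine part is Hölder plus embeddings; the genuine work is \emph{bookkeeping the exponent constraints}: one must verify that, under exactly the hypotheses \eqref{cond:range_q_nonlinearTP}--\eqref{cond:range_r_nonlinearTP}, there exist admissible Hölder splits $1/p_1+1/p_2=1/q$ (resp. $=1/r$) with each $p_i$ in the range reachable by the Sobolev/interpolation embeddings of the $\norm{\,\cdot\,}_{\rey}$-scale and of $\WSRper{1,2}{q}$, and that the resulting interpolation weights are the claimed $\theta,\eta,\zeta$ lying in $[0,2]$, $[0,2]$, $[0,1)$ respectively — in particular that $\zeta$ is \emph{strictly} less than $1$ (this is where the strict inequalities in \eqref{cond:range_q_nonlinearTP} and in the upper bound of \eqref{cond:range_r_nonlinearTP} are used) and that $\eta=2$ forces $r=(n+1)/2$. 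I expect the conditions $\frac{2}{q}-\frac{4}{n}\le\frac1r$ and $q\ge\frac{n}{3}$ to be exactly what makes the strong-norm splits \eqref{est:NonlinearEstimateSteadyStateStrong} and \eqref{est:NonlinearEstimateMixed1}--\eqref{est:NonlinearEstimateMixed2} feasible, and the lower bound $\frac{1}{q}+\frac{1}{n+1}\le\frac1r$ to govern the weak-norm splits; checking all cases of $n=3,4$ versus $n\ge5$ separately is the part that requires care rather than ideas.
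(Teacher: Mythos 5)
Your plan coincides with the paper's proof: Hölder plus Gagliardo--Nirenberg interpolation against the $\rey$-weighted $\LR{s}$-norm for the steady--steady terms (the paper takes the concrete split $\norm{\vvel_1}_{3q}\norm{\grad\vvel_2}_{3q/2}$ for \eqref{est:NonlinearEstimateSteadyStateStrong}, and $\norm{\vvel_1}_{2r}\norm{\vvel_2}_{2r}$ after writing $\vvel_1\cdot\grad\vvel_2=\Div\np{\vvel_1\otimes\vvel_2}$ for \eqref{est:NonlinearEstimateSteadyStateWeak}, whence $\eta=2$ iff $2r=s$, i.e.\ $r=(n+1)/2$, exactly as you observe), together with the anisotropic embeddings of $\WSRper{1,2}{q}$ from \cite[Theorem 4.1]{GaldiKyed_TPflowViscLiquidpBody} (with a case split $q>2n/3$ versus $q<n$) for the oscillatory and mixed terms. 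The exponent bookkeeping you defer is the only remaining content, and the paper itself carries it out only for these specific choices.
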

\begin{proof}
Due to \eqref{cond:range_r_nonlinearTP}, 
the Gagliardo--Nirenberg inequality (see \cite{MaremontiCrispo_InterpolationInequalityExteriorDomains}) 
implies
\[
\norm{\vvel_1}_{3q}\leq C\snorm{\vvel_1}_{2,q}^{\theta_1} \norm{\vvel_1}_s^{1-\theta_1},
\qquad
\norm{\vvel_2}_{3q/2}\leq C\snorm{\vvel_2}_{2,q}^{\theta_2} \norm{\vvel_2}_s^{1-\theta_2},
\]
for
$\theta_1,\,\theta_2\in[0,1]$.
An application of H\"older's inequality thus yields 
\[
\norm{\vvel_1\cdot\grad\vvel_2}_q 
\leq\norm{\vvel_1}_{3q}\norm{\grad\vvel_2}_{3q/2}
\leq C\snorm{\vvel_1}_{2,q}^{\theta_1}\norm{\vvel_1}_s^{1-\theta_1}
\snorm{\vvel_2}_{2,q}^{\theta_2}\norm{\vvel_2}_{s}^{1-\theta_2}
\leq C\rey^{-\frac{\theta}{n+1}}\norm{\vvel_1}_{\rey}\norm{\vvel_2}_{\rey},
\]
which is \eqref{est:NonlinearEstimateSteadyStateStrong} with $\theta=2-\theta_1-\theta_2$.
Since $\frac{1}{q}-\frac{2}{n}\leq\frac{1}{2r}\leq\frac{1}{s}$, 
in the same way one shows \eqref{est:NonlinearEstimateSteadyStateWeak} by estimating
\[
\snorm{\vvel_1\cdot\grad\vvel_2}_{-1,r}
=\snorm{\Div\np{\vvel_1\otimes\vvel_2}}_{-1,r}
\leq C\norm{\vvel_1}_{2r}\norm{\vvel_2}_{2r}
\leq C\rey^{-\frac{\eta}{n+1}}\norm{\vvel_1}_{\rey}\norm{\vvel_2}_{\rey}.
\]
Note that \eqref{cond:range_r_nonlinearTP} implies $\eta\in[0,2]$.
To derive \eqref{est:NonlinearEstimatePurelyPerStrong},
we distinguish two different cases. 
On the one hand, if $q>\max\set{2,n/2}$, H\"older's inequality and 
the embedding theorem from \cite[Theorem 4.1]{GaldiKyed_TPflowViscLiquidpBody} 
yield
\[
\norm{\wvel_1\cdot\grad\wvel_2}_{q}
\leq\norm{\wvel_1}_{\LRper{q}(\R;\LR{\infty}(\Omega))}
\norm{\grad\wvel_2}_{\LRper{\infty}(\R;\LR{q}(\Omega))}
\leq C\norm{\wvel_1}_{1,2,q}\norm{\wvel_2}_{1,2,q}.
\]
On the other hand, if $(n+2)/3\leq q<(n+1)/2$, we conclude in the same way
\[
\norm{\wvel_1\cdot\grad\wvel_2}_{q}
\leq\norm{\wvel_1}_{\LRper{2q}(\R;\LR{\frac{nq}{n+1-2q}}(\Omega))}
\norm{\grad\wvel_2}_{\LRper{2q}(\R;\LR{\frac{nq}{2q-1}}(\Omega))}
\leq C\norm{\wvel_1}_{1,2,q}\norm{\wvel_2}_{1,2,q}.
\]
This yields \eqref{est:NonlinearEstimatePurelyPerStrong}.
Since \eqref{cond:range_q_nonlinearTP} and \eqref{cond:range_r_nonlinearTP} 
imply $\frac{1}{r} \geq \frac{2(n+2)}{nq}-\frac{6}{n}$, 
and we have
$\frac{1}{q}-\frac{2}{n}\leq\frac{1}{2r}\leq\frac{1}{q}$, 
for the derivation of \eqref{est:NonlinearEstimatePurelyPerWeak}
we can again use H\"older's inequality and \cite[Theorem 4.1]{GaldiKyed_TPflowViscLiquidpBody} 
to deduce 
\begin{align*}
\begin{split}
\snorm{\proj\np{\wvel_1\cdot\grad\wvel_2}}_{-1,r}
&=\snorm{\Div\proj\np{\wvel_1\otimes\wvel_2}}_{-1,r}
\leq C\norm{\wvel_1\otimes\wvel_2}_{\LRper{1}(\R;\LR{r}(\Omega))}
\\
&\leq C\norm{\vvel}_{\LRper{2}(\R;\LR{2r}(\Omega))}\norm{\wvel}_{\LRper{2}(\R;\LR{2r}(\Omega))}
\leq C\norm{\vvel}_{1,2,q}\norm{\wvel}_{1,2,q}.
\end{split}
\end{align*}
The remaining estimates \eqref{est:NonlinearEstimateMixed1} and \eqref{est:NonlinearEstimateMixed2}
follow in a similar fashion.
\end{proof}

\begin{proof}[Proof of Theorem \ref{thm:OseenNonlinearTP}]
It suffices to show existence of a solution to \eqref{sys:NavierStokesOseenTPAfterLifting}.
Consider the solution operator 
\[
\cals_\rey\colon\bp{\LR{q}(\Omega)^n\cap\WSRhomN{-1}{r}(\Omega)^n}\oplus
\LRpercompl{q}\np{\R\times\Omega}^n
\to\Xoseen{q,r}, \quad
f\mapsto \uvel,
\] 
where $\uvel$ is the unique velocity field of the solution 
$(\uvel,\upres)\in\Xoseen{q,r}\times\LRper{q}(\R;\WSRhom{1}{q}(\Omega))$ 
to \eqref{sys:OseenTP}
that exists due to Theorem \ref{thm:OseenLinearTPfull}.
This yields a family of continuous linear operators 
with 
\begin{align}\label{est:LinearOperatorTP}
\norm{\cals_\rey f}_{\Xoseen{q,r}}
\leq C
\bp{\norm{f}_q+\rey^{-\frac{M}{n+1}}\snorm{\proj f}_{-1,r}},
\end{align}
with $M$ as in \eqref{eq:DefExponentRHSLinearOseen},
compare estimate \eqref{est:OseenLinearTPFull}.
Then $(\uvel,\upres)$ is a solution to \eqref{sys:NavierStokesOseenTPAfterLifting}
if
$\uvel$ is a fixed point of the mapping
\[
\calf\colon \Xoseen{q,r}\to \Xoseen{q,r}, \quad \uvel\mapsto\cals_\rey\np{f+\caln(\uvel)}.
\]
Now consider 
$
\uvel\in 
A_\rho\coloneqq
\setcl{\uvel\in\Xoseen{q,r}}
{\norm{\uvel}_{\rey}\leq\rho}
$
for a radius $\rho>0$ that will be chosen below,
and set $\vvel\coloneqq\proj\uvel$, $\wvel\coloneqq\projcompl\uvel$.
Then we have
\begin{align*}
\proj\caln(\uvel)
&=-\vvel\cdot\grad\vvel
-\proj\np{\wvel\cdot\grad\wvel}
-\vvel\cdot\grad\Vvel
-\Vvel\cdot\grad\vvel
-\Vvel\cdot\grad\Vvel
+\Delta\Vvel
-\rey\partial_1\Vvel,
\\
\projcompl\caln(\uvel)
&=
-\vvel\cdot\grad\wvel
-\wvel\cdot\grad\vvel
-\projcompl\np{\wvel\cdot\grad\wvel}
-\wvel\cdot\grad\Vvel
-\Vvel\cdot\grad\wvel,
\end{align*}
and an application of estimates \eqref{est:LinearOperatorTP} and \eqref{est:BoundaryLiftTP}
together with Lemma \ref{lem:NonlinearEstimatesTP} leads to
\begin{align*}
\norm{\calf(\uvel)}_{\Xoseen{q,r}}
&\leq C\bp{
\norm{f+\caln(\uvel)}_{q}
+\rey^{-\frac{M}{n+1}}\snorm{\proj\np{f+\caln(\uvel)}}_{-1,r}
}\\
&\leq C\Big(
\norm{f}_q+\rey^{-\frac{M}{n+1}}\snorm{f}_{-1,r}
+\np{1+\rey^{-\frac{M}{n+1}}}\np{\rey+\rey^2}\\
&\qquad\quad+\bp{1+\rey^{-\frac{\theta}{n+1}}+\rey^{-\frac{\zeta}{n+1}}+\rey^{-\frac{M+\eta}{n+1}}}
\bp{\norm{\uvel}_{\Xoseen{q,r}}+\norm{\Vvel}_{\Xoseen{q,r}}}^2
\Big) \\
%&\leq C
%\bp{\np{1+\rey^{-\frac{M}{n+1}}}\np{\varepsilon+\rey}
%+\bp{1+\rey^{-\frac{\theta}{n+1}}+\rey^{-\frac{\zeta}{n+1}}+\rey^{-\frac{M+\eta}{n+1}}}\np{\rho+\rey}^2}
%\\
&\leq C
\bp{\rey^{-\frac{M}{n+1}}\np{\varepsilon+\rey}
+\bp{\rey^{-\frac{\theta}{n+1}}+\rey^{-\frac{\zeta}{n+1}}+\rey^{-\frac{M+\eta}{n+1}}}\np{\rho+\rey}^2}.
\end{align*}
Similarly, for $\uvel_1,\uvel_2\in A_\rho$ we obtain
\begin{align*}
\norm{\calf(\uvel_1)&-\calf(\uvel_2)}_{\Xoseen{q,r}}
\leq C\bp{
\norm{\caln(\uvel_1)-\caln(\uvel_2)}_{q}
+\rey^{-\frac{M}{n+1}}\snorm{\proj\np{\caln(\uvel_1)-\caln(\uvel_2)}}_{-1,r}
}\\
&\leq C\bp{1+\rey^{-\frac{\theta}{n+1}}+\rey^{-\frac{\zeta}{n+1}}+\rey^{-\frac{M+\eta}{n+1}}}
\np{\norm{\uvel_1}_{\rey}+\norm{\uvel_2}_{\Xoseen{q,r}}+\norm{\Vvel}_{\Xoseen{q,r}}}
\norm{\uvel_1-\uvel_2}_{\Xoseen{q,r}}
\\
%&\leq C
%\bp{1+\rey^{-\frac{\theta}{n+1}}+\rey^{-\frac{\zeta}{n+1}}+\rey^{-\frac{M+\eta}{n+1}}}
%\np{\rho+\rey}\norm{\uvel_1-\uvel_2}_{\Xoseen{q,r}}.
%\\
&\leq C
\bp{\rey^{-\frac{\theta}{n+1}}+\rey^{-\frac{\zeta}{n+1}}+\rey^{-\frac{M+\eta}{n+1}}}
\np{\rho+\rey}\norm{\uvel_1-\uvel_2}_{\Xoseen{q,r}}.
\end{align*}
Note that the assumptions imply $\max\set{\theta,\zeta,M+\eta}<n+1-M$,
so that we can consider $\gamma\in\R$ with
\[
1\leq\frac{n+1}{n+1-M}
<\gamma
<\frac{n+1}{\max\set{\theta,\zeta,M+\eta}}.
\]
Now we choose $\rey=\varepsilon=\rho^\gamma$
and $\rho>0$ so small that
\[
 C
\bp{\rho^{\gamma-\frac{\gamma M}{n+1}}
+\rho^{2-\gamma\frac{\theta}{n+1}}
+\rho^{2-\gamma\frac{\zeta}{n+1}}
+\rho^{2-\gamma\frac{M+\eta}{n+1}}}
\leq \rho,
\quad
 C
\bp{\rho^{1-\gamma\frac{\theta}{n+1}}
+\rho^{1-\gamma\frac{\zeta}{n+1}}
+\rho^{1-\gamma\frac{M+\eta}{n+1}}}
\leq\frac{1}{2}.
\]
This ensures that $\calf\colon A_\rho\to A_\rho$ is a contractive self-mapping, 
and the contraction mapping principle finally yields the existence of a fixed point of $\calf$.
This completes the proof.
\end{proof}

\begin{proof}[Proof of Theorem \ref{thm:OseenNonlinear}]
We may proceed in a similar way as in the previous proof. 
Here we introduce the function space
\begin{align*}
\Zoseen{q,r}&\coloneqq\setcl{\uvel\in\WSRhom{2}{q}(\Omega)^n\cap\WSRhom{1}{r}(\Omega)^n\cap\LR{s}(\Omega)^n}
{\Div\uvel=0,\,\restriction{\uvel}{\partial\Omega}=0},
\\
\norm{\uvel}_{\Zoseen{q,r}}
&\coloneqq\norm{\uvel}_{\rey}
\coloneqq\snorm{\uvel}_{2,q}
+\snorm{\uvel}_{1,r}
+\rey^{\frac{1}{n+1}}\norm{\uvel}_s, \qquad 
s\coloneqq\frac{(n+1)r}{n+1-r},
\end{align*} 
and the solution operator 
$
\cals_\rey\colon\bp{\LR{q}(\Omega)^n\cap\WSRhomN{-1}{r}(\Omega)^n}
\to\Zoseen{q,r}
$,
$f\mapsto \uvel$,
where $\uvel$ is the unique velocity field of a solution 
$(\uvel,\upres)$ 
to \eqref{sys:Oseen}
that exists due to Theorem \ref{thm:OseenLinear}.
Then $\np{\vvel,\vpres}$ is a solution to \eqref{sys:NavierStokesOseen} 
if and only if $\np{\uvel,\upres}\coloneqq\np{\vvel-\Vvel,\vpres}$ is a 
fixed-point of the mapping
\[
\calf\colon\Zoseen{q,r}\to\Zoseen{q,r}, \quad
\uvel\mapsto\cals_\rey\np{f+\caln(\uvel)},
\]
where $\Vvel$ and $\caln(\uvel)$ are given as before.
The existence of such a fixed point can then be shown as in the proof of 
Theorem \ref{thm:OseenNonlinearTP} by making use of estimates
\eqref{est:NonlinearEstimateSteadyStateStrong} and
\eqref{est:NonlinearEstimateSteadyStateWeak}.
\end{proof}
\bigskip\par\noindent
{\textbf{Acknowledgment.}} The work of G.P.~Galdi is partially supported by NSF grant DMS-1614011.

%%%%%%%%%%%%%%%%%%%%%%%%%%%%%%%%%%%%%%%%%%%%%%%%%%%%%%%%%%%%%%
%%          Bibliography                                    %%
%%%%%%%%%%%%%%%%%%%%%%%%%%%%%%%%%%%%%%%%%%%%%%%%%%%%%%%%%%%%%%
\bibliographystyle{abbrv}

\smallskip\par\noindent
Fachbereich Mathematik\\
Technische Universit\"at Darmstadt\\
Schlossgartenstr. 7, 64289 Darmstadt, Germany\\
Email: {\texttt{eiter@mathematik.tu-darmstadt.de}
\medskip\smallskip\par\noindent
Department of Mechanical Engineering and Materials Science\\
University of Pittsburgh\\
Pittsburgh, PA 15261, USA\\
Email: {\texttt{galdi@pitt.edu}}
\end{document}